\documentclass{article}

\usepackage[utf8]{inputenc}
\usepackage{color}
\usepackage{subfigure}
\usepackage{url}
\usepackage{graphicx}
\usepackage{amsmath}
\usepackage{amsthm}
\usepackage{amssymb}
\usepackage{bm}
\usepackage[plainpages=false]{hyperref}
\usepackage{enumerate}

\newcommand*{\doi}[1]{doi: \href{https://dx.doi.org/#1}{\urlstyle{rm}\nolinkurl{#1}}}
\newcommand*{\arxiv}[1]{arXiv:  \href{https://arxiv.org/abs/#1}{\urlstyle{rm}\nolinkurl{#1}}}

% mess around with the section headers a bit
\usepackage{titlesec}
\titleformat{\section}
{\normalfont\Large\bfseries\filcenter}{\thesection.}{1 ex}{}
\titleformat{\subsection}
{\normalfont\normalsize\bfseries}{\thesubsection.}{1 ex}{}
\titleformat{\subsubsection}[runin]
{\normalfont\normalsize\bfseries\filcenter}{\thesubsection.}{1 ex}{}

% use prettier fonts
% \usepackage[charter]{mathdesign}
% \usepackage[mathcal]{eucal}

% these margins are easy to read
\usepackage[margin=1.2 in]{geometry}

% natbib, but with numeric citations
\usepackage[square,numbers,sort&compress]{natbib}

\usepackage{thmtools,thm-restate}
\declaretheorem[within=section]{theorem}

\declaretheorem[sibling=theorem,name=Proposition]{prop}

\declaretheorem[style=remark,sibling=theorem,qed={$\diamondsuit$}]{remark}
\declaretheorem[style=definition]{definition}

\declaretheorem[style=remark,sibling=theorem]{example}
% %%%%%%%%%%%%%%%%%%%%%%%%%%%%%%%%%%%%%%%%%%%%%

\newcommand{\iprod}[2]{\left\langle {#1},{#2}\right\rangle}

%%%%%%%%%%%%%%%%%%

%%%%%%%%%%%%%%%%%%

%%%%%%%%%%%%%%%%%

%\newcommand\th{\tilde{h}}

% \newcommand\to{\tilde{o}}

%\newcommand\tt{\tilde{t}}

%%%%%%%%%%%%%%%%%

%\newcommand\dh{\dot{h}}

%\newcommand\dj{\dot{j}}

% \newcommand\do{\dot{o}}

%\newcommand\dp{\dot{p}}

%%%%%%%%%%%%%%%%%

%%%%%%%%%%%%%%%%%

\newcommand\RR{\mathbb{R}}

%\newcommand\SS{\mathbb{S}}

%%%%%%%%%%%%%%%%%

%elements of a permutation
%%%%%%%%%%%%%%%%%

%\newcommand\deg{{\rm deg}}

\newcommand{\Euc}{{\rm Euc}}

%%%%%%%%%%%%%%%%%
 % Gale dual
 % Gale dual
 % Face complement Gale dual
 % Covectors
 % Cocircuits
\newcommand{\Id}{\mathrm{Id}}
%%%%%%%%%%%%%%%%%
%\mathbb{O}}

%\mathbb{1}}

%%%%%%%%%%%%%%%%%

% \newcommand\conv{\mathrm{conv}\:}

\DeclareMathOperator{\Sym}{Sym}
\DeclareMathOperator{\Aut}{Aut}

%%%%%%%%%%%%%%%%%%%%%%%%%%%%%%%%%%%%%%%%%%%%%
 % Defined words in blue

\newcommand\p{{\bf p}}
\newcommand\x{{\bf x}}

\newcommand\e{{\bf e}}

\newcommand\q{{\bf q}}

\newcommand{\eps}{\varepsilon}

\usepackage{tikz}
\usetikzlibrary{arrows}
\usepackage[utf8]{inputenc}
\usepackage{tabularx}
\usepackage{algorithm}
\usepackage{algpseudocode}
\usepackage{caption}
\usepackage{graphicx}

\title{Equilibrium stresses in frameworks via symmetric averaging}
\author{Cameron Millar\thanks{Skidmore, Owings \& Merrill,  London,  UK, \texttt{cameron.millar@som.com}}, Bernd Schulze\thanks{School of Mathematical Sciences, Lancaster University,  UK, \texttt{b.schulze@lancaster.ac.uk}}\, and Louis Theran\thanks{School of Mathematics and Statistics, University of St Andrews,  UK, \texttt{louis.theran@st-andrews.ac.uk}}}

\begin{document}

\date{}
\maketitle

\begin{abstract}
    For a bar-joint framework $(G,\p)$, a  subgroup $\Gamma$ of the automorphism group of $G$, and a subgroup of the orthogonal group isomorphic to $\Gamma$, we introduce a symmetric averaging map which produces a  bar-joint framework on $G$ with that symmetry. If the original configuration is ``almost symmetric", then the averaged one will be near  the original configuration.  With a view on structural engineering applications, we then introduce a hierarchy  of definitions of ``localised" and ``non-localised" or ``extensive" self-stresses of frameworks and investigate their behaviour under the symmetric averaging procedure. Finally, we present algorithms for finding non-degenerate symmetric frameworks with many states of self-stress, as well as non-symmetric and  symmetric frameworks with extensive self-stresses.
   The latter uses the symmetric averaging map in combination with  symmetric Maxwell-type character counts and a procedure based on the pure condition from algebraic geometry.
        These algorithms 
    provide new theoretical and computational tools  for the design of engineering structures such as gridshell roofs. 
\end{abstract}

\medskip

\noindent \textbf{Keywords}: bar-joint framework; self-stress; gridshell; symmetry; pure condition

\section{Introduction}
A bar-joint framework is, loosely speaking, a collection of rigid bars that are joined at their ends by freely-rotating joints. The  mathematical field of Geometric Rigidity Theory is concerned with the rigidity and flexibility analysis of bar-joint frameworks \cite{CGbook, bernd2017, WW}. This theory has many applications in science, technology and design, since bar-joint frameworks are useful models of a variety of real-world structures. See \cite{mbmm,marina,mmsb,smmb,schmil}, for example, for some recent applications in structural engineering.

Each  bar-joint framework $(G,\p)$ in $\RR^d$ with $n$ vertices and $m$ 
edges has an $s$-dimensional space $S(\p)$ of self-stresses (also known as equilbrium stresses) and an $f$-dimensional 
space of non-trivial infinitesimal motions.  These are related by the Maxwell 
index theorem
\begin{equation}\label{eq:maxwell}
   k=f-s= dn - m - \frac{d(d+1)}{2}.
\end{equation}
Once $\p$ is fixed, all of these numbers are easy to compute with linear 
algebra.  Moreover, it is well-known that, for each fixed $G$ and $d$, 
there is a typical value for $s$, which is achieved at almost every $\p$ (if $k=0$ then typically $f=s=0$, if $k>0$ then typically $f=k, s=0$, and if $k<0$ then typically $f=0, s=-k$).

Recently, a number of different applications, including the theory of 
generic global rigidity, graph realisation, statistics, and gridshell 
design, have suggested a different line of investigation.  Here, 
we allow $\p$ to vary, and instead, ask about the collection of 
vectors $\mathbf \omega\in \RR^m$ that are a self-stress for 
\emph{some} $(G,\p)$.  Research along these lines has antecedents in 
Connelly's work on universal rigidity \cite{connelly82,gt} in the early 1980's, and 
has been active over the past decade or so \cite{Abdo,CGT,GHT,gstr}.

This note is motivated by a specific type of question, which is 

\medskip 
\noindent    {\it For a fixed graph $G$ and dimension $d$, what is the maximum number of independent self-stresses 
    a framework $(G,\p)$ can have?} 
\medskip

Our specific motivation is gridshell design \cite{gridshell,mmsb,smmb}.  Here we imagine a 
framework  as the $1$-skeleton of a $3$-dimensional 
polyhedral surface that models a  roof made of flat glass panels supported by a steel skeleton (a gridshell).
When we consider vertical loads applied at the joints of the skeleton, axial resolutions 
in the bars correspond  to self-stresses in the $2$-dimensional planar
framework (also known as the form diagram) obtained by orthogonally projecting the 3D structure to the $xy$-plane.  (The actual shape of the polyhedral surface is also related to 
the self-stresses in the projection via Maxwell--Cremona liftings \cite{maxwell1864xlv, maxwell1870reciprocal,cremona1872figure, WW82}.) 
A central design constraint for a gridshell is that 
it axially resolves a certain set of dominant 
vertical loads.  To ensure this, designers work 
in reverse, starting from the $2$-dimensional form diagram, and 
build it to have  enough independent self-stresses.
(In general, the gridshell will \emph{not} be able 
to axially resolve \emph{every} load, and possibly not 
even every vertical load.  Modern gridshells have too 
few members to be statically rigid.  This is a source of 
subtlety in the problem.)

Figure \ref{fig:Gridshell} shows a hypothetical gridshell on a rounded rectangle boundary with dihedral symmetry of order 4.
%. It is doubly symmetric in form.
The gridshell has been designed so that it has axial forces only (is funicular) under its self-weight. It can also be thought of as a hanging cable net which has been inverted - ``As hangs a flexible cable so, inverted, stand the touching pieces of an arch". Many of these structures are considered to be undercounted. Gridshells are often quad-dominant as quadrilateral glass panels are more economical (reduced offcut wastage) and surfaces with planar quad discretisations are eminently realisable. Due to their undercounting, efficiently carrying unbalanced loads, such as wind and snow drifts, can be challenging. By considering states of self-stress in the form diagram (each state of self-self corresponds directly to a funicular load case for the gridshell) and their symmetry types, it is possible to design a gridshell to use material efficiently. This paper builds on previous efforts from rigidity theory to provide engineers with tools to design in a mathematically informed manner. 

\begin{figure}[h]
    \centering
    \includegraphics[width=0.7\textwidth]{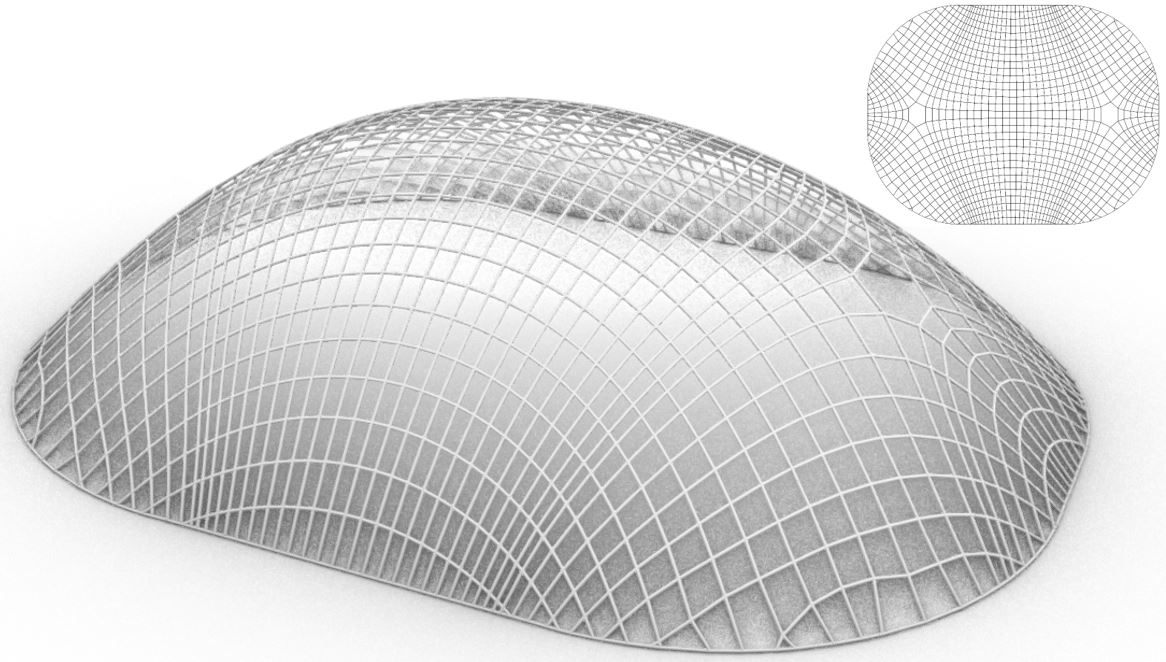}
    \caption{\centering Hypothetical gridshell design with form diagram (plan/top view) inset. \newline Note that this gridshell is not self-tied and relies upon pin supports along the boundary. }
    \label{fig:Gridshell}
\end{figure}

It is not hard to see that, in order to make the above question about maximising the number of self-stresses substantial, 
we need some restriction on $\p$.  Otherwise, we just make $\p_i = \p_j$
for all points $i$ and $j$ in the configuration $\p$.  Then $s = m$, which 
by the Maxwell index theorem is the largest possible.
One direction, which we will not pursuse, is to try and
enumerate a large set of application-specific desiderata.
These might include, for example, 
planar embedding, convex faces, no small 
angles, and so on.  Conditions like these are 
semi-algebraic, i.e., involving polynomial inequalities,
which are not present in rigidity-theoretic formalisms.
They are also typically difficult to analyse mathematically, and computationally harder than 
algebraic considerations.

Instead, we will focus on a specific issue, namely how symmetry 
affects the parameter $s$.  Symmetry arises very naturally for 
problems like this in several ways: symmetric designs are aesthetically 
pleasing and have appeared in the architectural pattern language for 
thousands of years; there is also a robust mathematical and engineering 
toolbox for designing and analysing symmetric structures and their 
responses to different real-world conditions. Decomposing loads and self-stresses into different symmetry types can be useful when designing gridshells and other engineering structures (see e.g. \cite{mmzk,schmil,pandia}). Moreover, symmetry often leads to additional states of self-stress that are not present in non-symmetric realisations (see e.g. \cite{fow00,smmb,mmsb}). Our interest here 
is to understand, given some (typically non-symmetric) framework $(G,\p)$ (which we 
imagine to be provided by either an optimisation procedure or a
designer), what the properties of (typically nearby) symmetric frameworks are.

A given graph $G$ has an automorphism group of combinatorial symmetries.
If we fix a specific subgroup isomorphic to a point group, and then associate 
it to the corresponding geometric symmetries of the plane, we 
have fixed a class of symmetric placements of $G$.  Inside of this smaller space, 
we may then seek to maximise $s$.  Because there are fewer total degrees of 
freedom, the symmetry approach can be more computationally tractable.  
Because self-stresses are preserved under projective transformations, 
it is also possible to use symmetry methods to construct self-stressed form diagrams
that are not themselves symmetric.

In this article we will first review the definition of a symmetric framework, pointing out some subtleties (Section~\ref{sec:sym}), and some basic terminology and results from Geometric Rigidity Theory (Section~\ref{sec:rig}). In Section~\ref{sec:symav} we then describe how to find a  symmetric framework to any framework $(G,\p)$ with non-trivial combinatorial symmetries via a novel symmetric averaging procedure. This procedure is inspired by the averaging method popularised by Whiteley in the last century \cite{bernd2017}. If the original framework $(G,\p)$ is ``almost"  symmetric, then the symmetrically averaged framework will be ``nearby"  $(G,\p)$. However, this procedure can also be applied to frameworks that are far from being symmetric, but whose graphs have non-trivial combinatorial symmetries. In Section~\ref{sec:symavprop} we show that the symmetric averaging function actually has the nice property that it is the orthogonal projection onto the space of symmetric configurations; hence it provides the closest symmetric framework to any given framework.
 We then introduce a hierarchy of notions of ``localised" and ``extensive" self-stresses (with respect to a given combinatorial symmetry) and illustrate, with examples and  theorems, the kinds of behaviours that symmetric averaging can  induce (Section~\ref{sec:stresses}). Moreover, we introduce a symmetry-independent notion of an ``extensive" self-stress. Loosely speaking, (symmetry-)extensive self-stresses are the ones that are most relevant to gridshell design, as they have larger support and do not arise from linear combinations of localised self-stresses. 
Section~\ref{sec:alg} is dedicated to the description of three heuristic algorithms, which for a given graph with combinatorial symmetries, allow us to: 
    \begin{itemize} \item [(1)] systematically explore, for a given graph $G$, the symmetries that provide the largest number of independent self-stresses in symmetric  realisations of $G$ in $2$- or $3$-space under symmetry-generic conditions. This relies on the symmetry-adapted Maxwell counts given in \cite{fow00,smmb,mmsb} and also provides information about the symmetry types of the self-stresses; %(If we are already given a realisation of $G$ that is close to a symmetric one, then  the graph automorphisms are determined by this original configuration and we may only focus on those to study the nearby symmetric realisation. The Great Court Roof is an example for that.);     
    \item [(2)] for an isostatic (i.e. rigid and self-stress-free) graph $G$, check for extensive self-stresses in (not necessarily symmetric) realisations of $G$ in $d$-space via a novel approach based on the pure condition  developed by White and Whiteley \cite{wwstresses}; for $d=2$, we also describe an alternative test based on the force-density method \cite{schek}, which is also known as ``rubber-banding" in Geometric Rigidity Theory \cite{tutte,LLW,lovasz};
    \item [(3)] for an isostatic graph $G$, check for the existence of symmetric realisations  in $\mathbb{R}^2$ that have an extensive  self-stress. Such realisations can then be constructed with the symmetric averaging method. This heuristic combines the ``symmetry-adapted Maxwell counting method" with the ``pure condition method" mentioned above. For simplicity, we focus on the case where the underlying graph is  isostatic. Extensions to more general graphs are left for future work.
   \end{itemize}

  Finally, in Section~\ref{sec:model}  we point out  some subtelties in the relationship between self-stresses in planar form diagrams and the ability of the corresponding $3$-dimensional roof structures to resolve vertical loads. Moreover, in Section~\ref{sec:practice} we analyse how errors in the configurations propagate to resolving vertical loads. These discussions are important for practical applications.

\section{Symmetric graphs and  frameworks}\label{sec:sym}
Let $G = (V,E)$ be a finite simple graph. We will assume that $|V| = n$  and 
$|E| = m$, i.e., that $G$ has $n$ vertices and $m$ edges.
A \emph{framework} in dimension $d$, $(G,\p)$, is a graph $G$ and a 
configuration $\p$ of $n$ points in $\RR^d$.  $(G,\p)$ is also called a \emph{realisation} of the graph $G$ in $\mathbb{R}^d$. (We will come back 
to the indexing of $V$ and $\p$ shortly.)

In this paper, it is convenient to assume that  
$V = \{1, \ldots, n\}$ so that the symmetric group 
$\Sym(n)$ acts on $V$ in the usual way by relabeling the vertices.  
Most vertex relabelings are not very interesting, because they 
change $G$.  A relabeling $\gamma \in \Sym(n)$ is called a 
\emph{graph automorphism} if it preserves $G$ in the following sense:
\[
    ij\in E \qquad \Longleftrightarrow \qquad \gamma(i)\gamma(j)\in E
\]
If $\gamma$ and $\gamma'$ are both graph automorphisms, then so is the 
composition $\gamma\circ \gamma'$ (which we usually write $\gamma\gamma'$).
The set of all the automorphims of $G$ is called $\Aut(G)$, and with 
function composition as the product it becomes a subgroup  of $\Sym(n)$.
So $\Aut(G) < \Sym(n)$ is the \emph{(combinatorial) symmetry group} of $G$.  Note that since $G$ 
doesn't have any geometry, $\Aut(G)$ can be quite large, but it won't be  
$\Sym(n)$, unless $G$ is complete or has no edges.  
Before leaving combinatorial symmetries, we need to look at how 
$\gamma\in \Aut(G)$ moves the edges of $G$.  Since $\gamma$ 
sends edges to edges, it induces a map $\hat{\gamma} : E\to E$, by 
the defining property 
\[
    \hat{\gamma}(ij) = \gamma(i)\gamma(j)\qquad \text{for all $ij\in E$}
\]
So there is a naturally associated subgroup of $\Sym(E)$ that has 
the same structure as $\Aut(G)$, namely 
\[
    \{\hat{\gamma} : \gamma\in \Aut(G)\} < \Sym(E)
\]
We usually don't write $\hat{\gamma}$ and follow the convention that 
$\gamma(ij) = \hat{\gamma}(ij)$.  (We note that the choice of $\hat{\gamma}$
is not the only one we could make, but we will always use this recipe.)

The next step towards symmetric frameworks is to define symmetric point 
configurations.  Let $\p$ be a configuration of $n$ distinct points in $\mathbb{R}^d$.  
In this note, configurations are ordered, so we have 
\[
    \p = (\p_1, \p_2, \ldots, \p_n)\in C_{d,n}
\]
where $C_{d,n}$ is the \emph{configuration space} of all $d$-dimensional 
configurations of $n$ points.  (We give it a name different from $\RR^{dn}$ 
because we ruled out coincident points.)

Next we fix some finite group 
\[
    T = \{\tau_1, \ldots, \tau_g\} < \Euc_0(d)
\]
of (necessarily linear) isometries of $\RR^d$ that fix the origin.  
(These are also known as \emph{point groups} and are classified  in dimensions $2$ and $3$, see e.g. \cite{atk70,alt94,conway}.)  We say that $\p$ is $T$-symmetric if every element of $T$ 
sends points in $\p$ to other points in $\p$; in symbols 
\[
   \text{for all $\p_i$ and $\tau_j\in T$ there exists $\p_k$ such that } \tau_j\p_i = \p_k.\qquad 
\]
We notice that $k$ is determined by $i$ and $j$, and that the assertion of
the displayed equation is that $k$ has to exist.  Because all the $\tau_j$
are bijections on $\RR^d$, the induced relabeling of the points will be a 
bijection on the labels.  For example, in Figure \ref{fig:desarguestypes}, the 
group $T$ acting on the point configuration given by the vertices of 
the frameworks in (a), (b) is $T = \{\operatorname{Id}, \sigma\}$, 
where $\sigma$ is the mirror reflection in the $y$-axis.  The 
relabelings of the points are different between them, since $\p_5$ and 
$\p_6$ are exchanged in (a), but fixed in (b).  In Figure \ref{fig:desarguestypes}
(c) the symmetry group is $\{\operatorname{id}, C_2\}$, where $C_2$ is the 
half-turn around the origin.  The induced relabeling of the points is, again, 
different, because, e.g., $\p_1$ is mapped to $\p_4$ in (c) and to $\p_2$ in (a),
(b).

\begin{figure}[htp]
\begin{center}
\begin{tikzpicture}[very thick,scale=1.2]
   \tikzstyle{every node}=[circle, draw=black, fill=white, inner sep=0pt, minimum width=4pt];

   \path (-1,-0.5) node (p3)  {} ;
 \path (-1.2,0.5) node (p1)  {} ;
    \path (1,-0.5) node (p4)  {} ;
   \path (1.2,0.5) node (p2)  {} ;
\path (-.5,-0.2) node (l)  {} ;
   \path (0.5,-0.2) node (r)  {} ;
\node [draw=white, fill=white] (b) at (-1.2,0.75) {$\p_1$};
 \node [draw=white, fill=white] (b) at (1.2,0.75) {$\p_2$};
 \node [draw=white, fill=white] (b) at (-1.0,-0.75) {$\p_3$};
 \node [draw=white, fill=white] (b) at (1.0,-0.75) {$\p_4$};
  \node [draw=white, fill=white] (b) at (-0.8,-0.2) {$\p_5$};
 \node [draw=white, fill=white] (b) at (0.8,-0.2) {$\p_6$};

\draw (p1)  --  (p2);
        \draw (p1)  --  (p3);
\draw (p4)  --  (p2);
        \draw (p4)  --  (p3);
\draw (l)  --  (p1);
        \draw (l)  --  (p3);
\draw (r)  --  (p2);
        \draw (r)  --  (p4);
        \draw(l)--(r);

\draw[thin, dashed] (0,-0.8)--(0,0.8);
   
 \node [draw=white, fill=white] (b) at (0,-1.4) {(a)};
\end{tikzpicture}
\hspace{0.2cm}
\begin{tikzpicture}[very thick,scale=1.2]
   \tikzstyle{every node}=[circle, draw=black, fill=white, inner sep=0pt, minimum width=4pt];

   \path (-0.5,0.8) node (p1)  {} ;
 \path (0.5,0.8) node (p2)  {} ;
    \path (-0.7,-0.8) node (p3)  {} ;
   \path (0.7,-0.8) node (p4)  {} ;
\path (0,-0.2) node (l)  {} ;
   \path (0,0.5) node (r)  {} ;

\node [draw=white, fill=white] (b) at (-0.8,0.8) {$\p_1$};
 \node [draw=white, fill=white] (b) at (0.8,0.8) {$\p_2$};
 \node [draw=white, fill=white] (b) at (-1.0,-0.8) {$\p_3$};
 \node [draw=white, fill=white] (b) at (1.0,-0.8) {$\p_4$};
  \node [draw=white, fill=white] (b) at (-0.3,-0.2) {$\p_5$};
 \node [draw=white, fill=white] (b) at (-0.3,0.4) {$\p_6$};

\draw (p1)  --  (p2);
        \draw (p1)  --  (p3);
\draw (p4)  --  (p2);
        \draw (p4)  --  (p3);
\draw (l)  --  (p3);
        \draw (l)  --  (p4);
\draw (r)  --  (p1);
        \draw (r)  --  (p2);
        \draw(l)--(r);

\draw[thin, dashed] (0,-1)--(0,1);
   
 \node [draw=white, fill=white] (b) at (0,-1.4) {(b)};
\end{tikzpicture}
\hspace{0.2cm}
\begin{tikzpicture}[very thick,scale=1.2]
   \tikzstyle{every node}=[circle, draw=black, fill=white, inner sep=0pt, minimum width=4pt];

   \path (-1,-0.5) node (p3)  {} ;
 \path (-1,0.5) node (p1)  {} ;
    \path (1,-0.5) node (p4)  {} ;
   \path (1,0.5) node (p2)  {} ;
\path (-0.5,0.1) node (l)  {} ;
   \path (0.5,-0.1) node (r)  {} ;

\node [draw=white, fill=white] (b) at (-1,0.75) {$\p_1$};
 \node [draw=white, fill=white] (b) at (1,0.75) {$\p_2$};
 \node [draw=white, fill=white] (b) at (-1.0,-0.75) {$\p_3$};
 \node [draw=white, fill=white] (b) at (1.0,-0.75) {$\p_4$};
  \node [draw=white, fill=white] (b) at (-0.8,0) {$\p_5$};
 \node [draw=white, fill=white] (b) at (0.8,-0.1) {$\p_6$};

\draw (p1)  --  (p2);
        \draw (p1)  --  (p3);
\draw (p4)  --  (p2);
        \draw (p4)  --  (p3);
\draw (l)  --  (p1);
        \draw (l)  --  (p3);
\draw (r)  --  (p2);
        \draw (r)  --  (p4);
        \draw(l)--(r);

\node [draw=white, fill=white] (b) at (0,-1.4) {(c)};
\end{tikzpicture}
\hspace{0.2cm}
\begin{tikzpicture}[very thick,scale=1.2]
   \tikzstyle{every node}=[circle, draw=black, fill=white, inner sep=0pt, minimum width=4pt];

   \path (-1,-0.5) node (p3)  {} ;
 \path (-1,0.5) node (p1)  {} ;
    \path (1,-0.5) node (p4)  {} ;
   \path (1,0.5) node (p2)  {} ;
\path (-0.5,0) node (l)  {} ;
   \path (0.5,0) node (r)  {} ;

\node [draw=white, fill=white] (b) at (-1,0.75) {$\p_1$};
 \node [draw=white, fill=white] (b) at (1,0.75) {$\p_2$};
 \node [draw=white, fill=white] (b) at (-1.0,-0.75) {$\p_3$};
 \node [draw=white, fill=white] (b) at (1.0,-0.75) {$\p_4$};
  \node [draw=white, fill=white] (b) at (-0.8,0) {$\p_5$};
 \node [draw=white, fill=white] (b) at (0.8,0) {$\p_6$};

\draw (p1)  --  (p2);
        \draw (p1)  --  (p3);
\draw (p4)  --  (p2);
        \draw (p4)  --  (p3);
\draw (l)  --  (p1);
        \draw (l)  --  (p3);
\draw (r)  --  (p2);
        \draw (r)  --  (p4);
        \draw(l)--(r);
\draw[thin, dashed] (0,-0.8)--(0,0.8);
  \draw[thin, dashed] (-1.5,0)--(1.5,0);
\node [draw=white, fill=white] (b) at (0,-1.4) {(d)};
\end{tikzpicture}
\hspace{0.2cm}
             \begin{tikzpicture}[very thick,scale=1.2]
\tikzstyle{every node}=[circle, draw=black, fill=white, inner sep=0pt, minimum width=4pt];

\node [draw=white, fill=white] (b) at (-0.8,-0.8) {$\p_1$};
 \node [draw=white, fill=white] (b) at (0.8,-0.8) {$\p_2$};
 \node [draw=white, fill=white] (b) at (0,1.3) {$\p_3$};
 \node [draw=white, fill=white] (b) at (-0.38,0) {$\p_4$};
  \node [draw=white, fill=white] (b) at (0.23,-0.35) {$\p_5$};
 \node [draw=white, fill=white] (b) at (0.2,0.36) {$\p_6$};

        \path (90:1cm) node (p1) {} ;
       \path (210:1cm) node (p2) {} ;
           \path (330:1cm) node (p3) {} ;

 \path (100:0.4cm) node (p11) {} ;
       \path (220:0.4cm) node (p22) {} ;
           \path (340:0.4cm) node (p33) {} ;

        \draw (p1)  --  (p2);
         \draw (p1)  --  (p3);
        \draw (p3)  --  (p2);
        \draw (p11)  --  (p22);
         \draw (p11)  --  (p33);
        \draw (p33)  --  (p22);
         \draw (p1)  --  (p11);
         \draw (p2)  --  (p22);
        \draw (p3)  --  (p33);
                
        \node [rectangle,draw=white, fill=white] (e) at (0,-1.4) {(e)};        
                  \end{tikzpicture}
                \hspace{0.2cm}  
             \begin{tikzpicture}[very thick,scale=1.2]
\tikzstyle{every node}=[circle, draw=black, fill=white, inner sep=0pt, minimum width=4pt];

 \node [draw=white, fill=white] (b) at (-0.48,0) {$\p_4$};
  \node [draw=white, fill=white] (b) at (0.2,-0.38) {$\p_5$};
 \node [draw=white, fill=white] (b) at (0.2,0.36) {$\p_6$};

        \path (90:1cm) node (p1) {} ;
       \path (210:1cm) node (p2) {} ;
           \path (330:1cm) node (p3) {} ;

 \path (90:0.5cm) node (p11) {} ;
       \path (210:0.5cm) node (p22) {} ;
           \path (330:0.5cm) node (p33) {} ;

\node [draw=white, fill=white] (b) at (-0.8,-0.8) {$\p_1$};
 \node [draw=white, fill=white] (b) at (0.8,-0.8) {$\p_2$};
 \node [draw=white, fill=white] (b) at (0,1.3) {$\p_3$};
           
        \draw (p1)  --  (p2);
         \draw (p1)  --  (p3);
        \draw (p3)  --  (p2);
        \draw (p11)  --  (p22);
         \draw (p11)  --  (p33);
        \draw (p33)  --  (p22);
         \draw (p1)  --  (p11);
         \draw (p2)  --  (p22);
        \draw (p3)  --  (p33);

\draw[thin, dashed] (0,-0.8)--(0,0.8);
\draw[thin, dashed] (210:0.8cm)--(30:0.8cm);
     \draw[thin, dashed] (150:0.8cm)--(330:0.8cm);           
         \node [rectangle,draw=white, fill=white] (a) at (0,-1.4) {(f)};        
                  \end{tikzpicture}
       
\end{center}
\vspace{-0.6cm} \caption{The triangular prism graph can be realised as a bar-joint framework in the plane with reflection symmetry $\mathcal{C}_s$ (a,b), half-turn symmetry $\mathcal{C}_2$ (c), dihedral symmetry $\mathcal{C}_{2v}$ of order 4 (d), three-fold rotational symmetry $\mathcal{C}_3$ (e) and dihedral symmetry $\mathcal{C}_{3v}$ of order 6 (f).  The symmetry types shown here correspond to the choices of the 
combinatorial group $\Gamma$ and point group representation $\tau$ that lead to 
planar realisations.  Other choices exist, but the associated drawings have 
crossings.}
\label{fig:desarguestypes}
\end{figure}
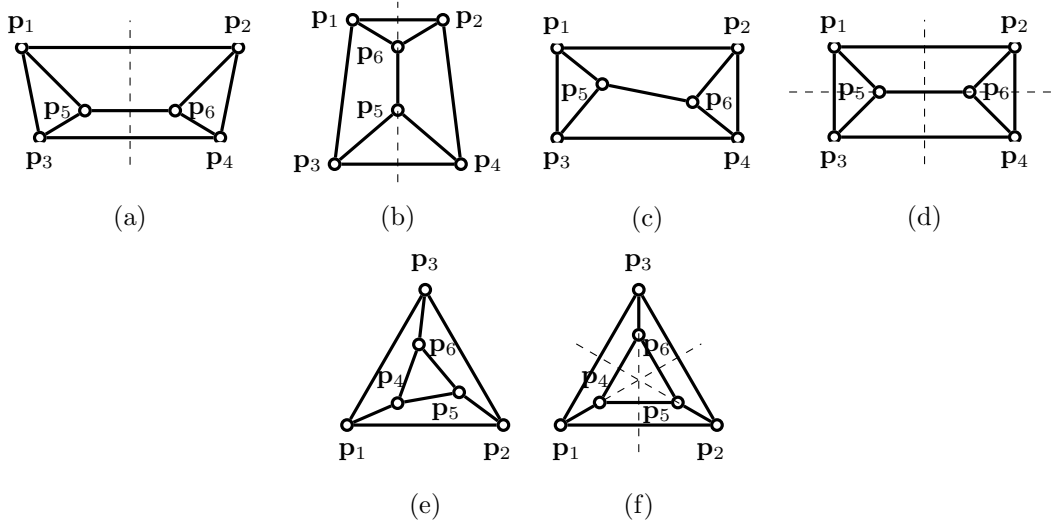

Finally, we combine the two concepts to say what a symmetric framework is.  The 
notation is $(G,\Gamma,\tau,\p)$ which we call a \emph{$(\Gamma,\tau)$-symmetric 
framework}. We sometimes also just write $(G,\p)$ in short. The data are $\Gamma < \Aut(G)$, a subgroup of the combinatorial symmetry group of $G$, and $\tau : \Gamma\to \Euc_0(d)$ a faithful representation of 
$\Gamma$ by origin-fixing isometries.  This means that, if 
\[
    \Gamma = \{\gamma_1, \ldots, \gamma_g\}\qquad \text{and}\qquad 
    T = \{\tau(\gamma_1), \ldots, \tau({\gamma_g})\}
\]
then $T=\tau(\Gamma)$ is a finite subgroup of isometries that 
has the same structure as $\Gamma$.  In symbols, we have, for all $i,j$
\[
    \tau(\gamma_i\gamma_j) = \tau(\gamma_i)\tau(\gamma_j)
\]
and also that
\[
    \tau(\gamma) = \Id\qquad \textrm{ implies that }\qquad \gamma = \Id
\]
Notice that this puts a lot of restrictions on what $\Gamma$ can be, since there 
are not very many finite subgroups of $\Euc_0(d)$, at least when $d$ is 
small relative to $n$.  To 
complete the definition, we add a compatibility condition between 
what $\Gamma$ does to $G$ and what $T$ does to $\p$ (it is not 
enough to just say that $\p$ is $T$-symmetric in some way).  What we need is 
that 
\[
    \tau(\gamma)\p_i = \p_{\gamma(i)}\qquad \text{for all $\gamma\in \Gamma$ and 
    $i\in V$}
\]
In words, this says that, for each element $\gamma$ of $\Gamma$  and 
point $\p_i$, if we apply the geometric operation $\tau(\gamma)$
to the point $\p_i$ we get the point $\p_{\gamma(i)}$ that 
corresponds to how $\gamma$ moves the vertex $i$. We can now 
consider Figure \ref{fig:desarguestypes} in more detail.  In 
Figure \ref{fig:desarguestypes}(a), $\Gamma$ is the group 
$\{\operatorname{Id}, (1\, 2)(5\, 6)(3\, 4)\}$, and 
$\tau(\Gamma)$ is the group $\{\operatorname{Id}, \sigma\}$, as described 
above.  The compatibility conditions are quickly checked to be met.
In Figure \ref{fig:desarguestypes}(b), $\tau(\Gamma)$ is the same group as 
in (a), but $\Gamma$ is different, namely 
$\{\operatorname{Id}, (1\, 2)(3\, 4)\}$; this shows why the compatibility 
condition asks for more than $\p$ being $T$-symmetric in some way.
Figure \ref{fig:desarguestypes}(c) is an example where $\Gamma$ and 
$\tau(\Gamma)$ are both different from what we have seen in (a) and (b): 
$\Gamma = \{\operatorname{Id},(1\, 4)(2\, 3)(5\, 6)\}$ and 
$\tau(\Gamma) = \{\operatorname{Id},C_2\}$.

Note that for any $2$-dimensional framework, $\tau(\gamma)$ is either the identity, denoted by  $\textrm{Id}$, a rotation by $\frac{2\pi}{n}$, $n\in \mathbb{N}$, about the origin, denoted by $C_n$, or a  reflection in a line through the origin,  denoted by $\sigma$. The point groups $\tau(\Gamma)$ that can be created from these operations are the infinite sets $\mathcal{C}_n$ and $\mathcal{C}_{nv}$ for all $n\in \mathbb{N}$. The group $\mathcal{C}_n$ is the cyclic group generated by $C_n$, and $\mathcal{C}_{nv}$ is the dihedral group generated by a pair $\{C_n,\sigma\}$. The reflection group $\mathcal{C}_{1v}$ is usually denoted by $\mathcal{C}_s$. See \cite{atk70,alt94}, for example, for further details and a corresponding classification  in $3$-space.

We conclude this section with Figure \ref{fig:2}, which shows additional 
examples of $(\Gamma, \tau)$-symmetric frameworks on the same underlying 
graphs as $\Gamma$ and $\tau$ vary.  Since the methods used to analyse 
symmetric frameworks need $\Gamma$ and $\tau$ fixed before they can be 
used, examples like these show that these early choices constrain what 
kinds of frameworks are possible.  We will also see later that the different 
choices of $\Gamma$ and $\tau$ can give rise to different rigidity properties and not 
just look different.

\begin{figure}[htp]
\begin{center}
\begin{tikzpicture}[very thick,scale=1]
   \tikzstyle{every node}=[circle, draw=black, fill=white, inner sep=0pt, minimum width=4pt];

   \path (-1,-0.5) node (p3)  {} ;
 \path (-1.2,0.5) node (p1)  {} ;
    \path (1,-0.5) node (p4)  {} ;
   \path (1.2,0.5) node (p2)  {} ;
\path (-.5,0) node (l)  {} ;
   \path (0.5,0) node (r)  {} ;

 \node [draw=white, fill=white] (b) at (-1.5,0.6) {$\p_1$};
 \node [draw=white, fill=white] (b) at (1.5,0.6) {$\p_2$};
 \node [draw=white, fill=white] (b) at (-1.3,-0.6) {$\p_3$};
 \node [draw=white, fill=white] (b) at (1.3,-0.6) {$\p_4$};
  \node [draw=white, fill=white] (b) at (-0.8,0) {$\p_5$};
 \node [draw=white, fill=white] (b) at (0.8,0) {$\p_6$};

\draw (p1)  --  (p2);
        \draw (p1)  --  (p3);
\draw (p4)  --  (p2);
        \draw (p4)  --  (p3);
\draw (l)  --  (p1);
        \draw (l)  --  (p3);
\draw (r)  --  (p2);
        \draw (r)  --  (p4);
        \draw(l)--(r);

\draw[thin, dashed] (0,-0.8)--(0,0.8);
   
 \node [draw=white, fill=white] (b) at (0,-1.5) {(a)};
\end{tikzpicture}
\hspace{0.3cm}
       \begin{tikzpicture}[very thick,scale=1]
   \tikzstyle{every node}=[circle, draw=black, fill=white, inner sep=0pt, minimum width=4pt];

   \path (-1.2,-0.5) node (p3)  {} ;
 \path (-1.2,0.7) node (p1)  {} ;
    \path (1.2,-0.7) node (p4)  {} ;
   \path (1.2,0.5) node (p2)  {} ;
\path (-.5,0.1) node (l)  {} ;
   \path (0.5,-0.1) node (r)  {} ;

 \node [draw=white, fill=white] (b) at (-1.5,0.6) {$\p_1$};
 \node [draw=white, fill=white] (b) at (1.5,0.6) {$\p_4$};
 \node [draw=white, fill=white] (b) at (-1.5,-0.6) {$\p_3$};
 \node [draw=white, fill=white] (b) at (1.5,-0.6) {$\p_2$};
  \node [draw=white, fill=white] (b) at (-0.8,0.1) {$\p_5$};
 \node [draw=white, fill=white] (b) at (0.8,-0.1) {$\p_6$};

\draw (p1)  --  (p4);
        \draw (p1)  --  (p3);
\draw (p4)  --  (p2);
        \draw (p2)  --  (p3);
\draw (l)  --  (p1);
        \draw (l)  --  (p3);
\draw (r)  --  (p2);
        \draw (r)  --  (p4);
        \draw(l)--(r);

%\draw[thin, dashed] (0,-0.8)--(0,0.8);
   
 \node [draw=white, fill=white] (b) at (0,-1.5) {(b)};
\end{tikzpicture}
\hspace{0.3cm}
\begin{tikzpicture}[very thick,scale=1]
   \tikzstyle{every node}=[circle, draw=black, fill=white, inner sep=0pt, minimum width=4pt];

   \path (-1,-0.5) node (p3)  {} ;
 \path (-1.2,0.5) node (p1)  {} ;
    \path (1,-0.5) node (p4)  {} ;
   \path (1.2,0.5) node (p2)  {} ;
\path (-.5,0) node (l)  {} ;
   \path (0.5,0) node (r)  {} ;

 \node [draw=white, fill=white] (b) at (-1.5,0.6) {$\p_1$};
 \node [draw=white, fill=white] (b) at (1.5,0.6) {$\p_4$};
 \node [draw=white, fill=white] (b) at (-1.3,-0.6) {$\p_3$};
 \node [draw=white, fill=white] (b) at (1.3,-0.6) {$\p_2$};
  \node [draw=white, fill=white] (b) at (-0.8,0) {$\p_5$};
 \node [draw=white, fill=white] (b) at (0.8,0) {$\p_6$};

\draw (p1)  --  (p4);
        \draw (p1)  --  (p3);
\draw (p4)  --  (p2);
        \draw (p2)  --  (p3);
\draw (l)  --  (p1);
        \draw (l)  --  (p3);
\draw (r)  --  (p2);
        \draw (r)  --  (p4);
        \draw(l)--(r);

\draw[thin, dashed] (0,-0.8)--(0,0.8);
   
 \node [draw=white, fill=white] (b) at (0,-1.5) {(c)};
\end{tikzpicture}
\hspace{0.3cm}
       \begin{tikzpicture}[very thick,scale=1]
   \tikzstyle{every node}=[circle, draw=black, fill=white, inner sep=0pt, minimum width=4pt];

   \path (-1.2,-0.5) node (p3)  {} ;
 \path (-1.2,0.7) node (p1)  {} ;
    \path (1.2,-0.7) node (p4)  {} ;
   \path (1.2,0.5) node (p2)  {} ;
\path (-.5,0.1) node (l)  {} ;
   \path (0.5,-0.1) node (r)  {} ;

 \node [draw=white, fill=white] (b) at (-1.5,0.6) {$\p_1$};
 \node [draw=white, fill=white] (b) at (1.5,0.6) {$\p_2$};
 \node [draw=white, fill=white] (b) at (-1.5,-0.6) {$\p_3$};
 \node [draw=white, fill=white] (b) at (1.5,-0.6) {$\p_4$};
  \node [draw=white, fill=white] (b) at (-0.8,0.1) {$\p_5$};
 \node [draw=white, fill=white] (b) at (0.8,-0.1) {$\p_6$};

\draw (p1)  --  (p2);
        \draw (p1)  --  (p3);
\draw (p4)  --  (p2);
        \draw (p4)  --  (p3);
\draw (l)  --  (p1);
        \draw (l)  --  (p3);
\draw (r)  --  (p2);
        \draw (r)  --  (p4);
        \draw(l)--(r);

%\draw[thin, dashed] (0,-0.8)--(0,0.8);
   
 \node [draw=white, fill=white] (b) at (0,-1.5) {(d)};
\end{tikzpicture}
\end{center}
\vspace{-0.6cm} \caption{$(\Gamma,\tau)$-symmetric frameworks in the plane, where $\Gamma$ has order $2$. For (a) and (b), $\Gamma=\{\Id, (1,2)(3,4)(5,6)\}$. However, in (a) the non-trivial element of $\Gamma$ is mapped to a reflection, whereas in (b) it is mapped to the half turn. For (c) and (d), $\Gamma=\{\Id, (1,4)(2,3)(5,6)\}$, where in (c) and (d) the non-trivial element of $\Gamma$ is again mapped to a reflection and the half turn, respectively. }
\label{fig:2}
\end{figure}
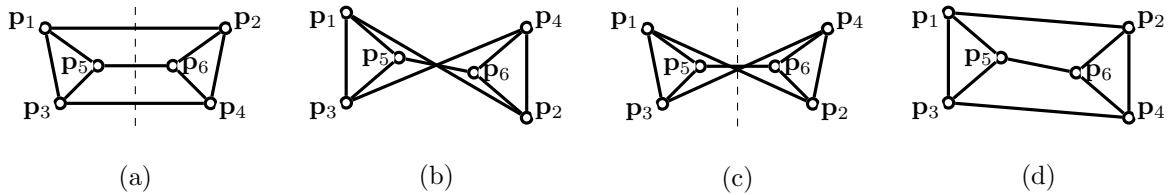

Now that we have the formal set-up, we make a first observation: 

\medskip
 \noindent   \emph{If we want to know whether $G$ has ``some symmetric realisation'' without 
a ``combinatorial symmetry type'' $\Gamma$ in mind, then we first have to 
compute  $\Aut(G)$, enumerate its subgroups, and then identify the ones that 
have a faithful representation $\tau : \Gamma \to \Euc_0(d)$.}
\medskip

In general, this is a difficult computational problem, but it is tractable for 
small and medium-sized examples using state-of-the art computational algebra
systems, such as Magma, GAP, Oscar, and Mathematica.  These incorporate 
optimised heuristic algorithms for computing $\Aut(G)$  and for identifying 
point groups. 
When the graph is informed through engineering or design decisions, this can often be done through observation as the desired symmetry group is well known by the practitioner.

\section{Background on rigidity theory}\label{sec:rig}

An \emph{infinitesimal motion} of a framework $(G,\p)$  in $\mathbb{R}^d$ is a function $\mathbf{u}: V\to \mathbb{R}^{d}$ such that
\begin{equation}
\label{infinmotioneq}
(\p_i-\p_j)\cdot (\mathbf u_i-\mathbf u_j) =0 \quad\textrm{ for all } ij \in  E\textrm{,}
\end{equation}
where $\p_i=\p(i)$, $\mathbf u_i=\mathbf u(i)$ for each $i$ and the $\cdot$ symbol denotes the standard inner product on $\mathbb{R}^d$. We note that an infinitesimal motion can be thought of as an element of the space $\mathbb{R}^{dn}$.

An infinitesimal motion $\mathbf u$ of  $(G,\p)$ is  \emph{trivial} if there exists a skew-symmetric matrix $S$
and a vector $\mathbf t$ such that $\mathbf u_i=S \p_i+\mathbf t$ for all $i\in V$, i.e., if $\mathbf u$ corresponds to a rigid body motion in the plane. A non-trivial infinitesimal motion is also called an \emph{infinitesimal flex}.

The matrix corresponding to the linear system in (\ref{infinmotioneq})  with the $\mathbf u_i$ being the unknowns, is the \emph{rigidity matrix} (or \emph{equilibrium matrix}), denoted $R(\p)$.  If the points $\p_i$ affinely span $\RR^d$, the infinitesimal rigidity is 
equivalent to the rank of $R(\p)$ being $dn - \binom{d+1}{2}$.  (When a framework 
has few vertices, it will be infinitesimally rigid if and only if $G$ 
is complete and $\p$ is affinely independent.  In applications, this 
situation rarely arises.)

A \emph{self-stress} of a framework $(G,\p)$ is a function  $\mathbf{\omega}:E\to \mathbb{R}$ such that for each  vertex $i$ of $G$ the following vector equation holds:
\begin{displaymath}
\sum_{j :ij\in E}\mathbf \omega(ij)(\p_{i}-\p_{j})=0 \textrm{.}
\end{displaymath}
In structural engineering, $\mathbf \omega(ij)(\p_{i}-\p_{j})$ is the signed axial force in the bar $ij$
and the stress-coefficient $\mathbf \omega(ij)$ is called the force-density (scalar force divided
by the bar length) of the bar $ij$.
The summation above says that the tensions and compressions in the bars balance at each node $i$, and hence a self-stress is also known as an \emph{equilibrium stress}.
For the engineer, a self-stress is often considered as a set of axial forces within a framework which are in equilibrium in the absence of external loads. 
Note that $\mathbf \omega \in \mathbb{R}^{|E|}$ is a self-stress of $(G,\p)$ if and only if $\mathbf \omega^TR(\p)=0$ (i.e. it lies in the left null space of $R(\p)$). A framework is called \emph{independent} if it has no non-zero self-stress, and it is called \emph{isostatic} if it is infinitesimally rigid and independent.
The term $\mathbf \omega$ is called the \textit{force density} by engineers and the \textit{stress} by rigidity theorists -- please refer to Connelly and Guest \cite{CGbook} for further details on the engineering and mathematical terms.

A framework $(G,\p)$ in $\mathbb{R}^d$ is \emph{generic} if $\textrm{rank}R(\p)=\textrm{max}\{\textrm{rank} R(\p')|\, \p'\in \mathbb{R}^{d|V|} \}$. It is easy to see that the set of configurations $\p$ of generic frameworks $(G,\p)$ in $\mathbb{R}^d$ forms an open dense  subset of $\mathbb{R}^{d|V|}$. Moreover, all generic $d$-dimensional frameworks on a graph $G$ share the same infinitesimal rigidity properties, i.e. they are either all infinitesimally rigid, or none of them are. Thus, we  say that a graph $G$ is \emph{$d$-rigid} (\emph{$d$-isostatic, $d$-independent}) if any (or equivalently every) generic realisation of $G$ as a framework in $\mathbb{R}^d$ is infinitesimally rigid (isostatic, independent).

\section{Symmetric averaging}\label{sec:symav}
If we want to understand the effect of symmetry on rigidity 
properties, such as the number of independent self-stresses, a starting point 
is to ask when a given framework $(G,\p)$ has a 
nearby $(\Gamma,\tau)$-symmetric framework.  As we have already seen,
a rigidity analysis depends on the $(\Gamma,\tau)$ data, so we need 
to make that choice in advance.  Once we have, a natural next step 
for a local search procedure is to find the closest $(\Gamma,\tau)$-symmetric
framework to $(G,\p)$.

A first proposal draws inspiration from the classic averaging technique of 
Whiteley \cite{bernd2017}.  This takes two  frameworks 
$(G,\p)$ and $(G,\q)$ that have the same edge lengths and 
produces a framework $(G,\frac{1}{2}(\p + \q))$.  Whiteley 
showed that if the pair of original frameworks are non-congruent, then 
the averaged framework has a non-trivial infinitesimal motion.  By 
the index theorem, if $(G,\p)$ and $(G,\q)$  infinitesimally rigid, then 
the averaged framework must have an unexpected self-stress.
Let us specialise our setting even more, and assume that $\Gamma$
has order two and $\tau(\Gamma) = \{\operatorname{Id},\sigma\}$ 
consists of the identity and a single mirror reflection.  The proposal 
would be to take the average of the frameworks $(G,\p)$ and 
$(G,\sigma\p)$ (we apply $\sigma$ to each $\p_i$).
 Unfortunately, this procedure isn't useful, 
because the average of $\p_i$ and $\sigma(\p_i)$ must lie on the 
mirror line. We will always get a highly degenerate framework this way.
Instead, we will describe a method to find the closest 
$(\Gamma,\tau)$-symmetric framework to $(G,\p)$ via orthogonal 
projection.

\subsection{Symmetric averaging for the reflection group}

We stick with the example from above. Let the 
given framework $(G,\p)$ have vertex set $\{1, \ldots, n\}$ and an involution 
$\gamma\in \Aut(G) < \Sym(n)$.  As before, $\Gamma = \{\operatorname{Id},\gamma\}$
and $\tau(\gamma) = \sigma$.
We define 
\[
    (\gamma\cdot \p) \qquad \text{by the relation} \qquad (\gamma\cdot \p)_i = \sigma(\p_{\gamma(i)})
\]
which is different from what we had before, because we use the 
permutation $\gamma$ and the reflection $\sigma$ together.
The relabeling is formal, but it is motivated by the idea that, if $(G,\p)$ was 
already symmetric with respect to the reflection $\sigma$, then $\sigma(\p_{\gamma(i)})$ would equal $\p_i$, for all $i$. Since 
the ``symmetrically averaged''  framework $(G, \frac{1}{2}(\p + \gamma\cdot \p))$ 
is, in fact, $(\Gamma,\tau)$-symmetric (we will check this soon), 
this procedure has two good properties: its output is symmetric; it fixes symmetric 
frameworks.  Moreover, unlike the more naive proposal, it does not 
collapse its input.  Interestingly, this ``relabel, reflect, average'' 
procedure can have a surprising number of 
behaviours with respect to infinitesimal flexes and self-stresses, 
as we will discuss in Section~\ref{sec:stresses}.
The symmetric averaging procedure is illustrated in Figure~\ref{fig:avgex} for the graph of the triangular prism. Here $\Gamma<\textrm{Aut}(G)$ is of order $2$ and generated by $\gamma=(1,2)(3)(4,5)(6)$.

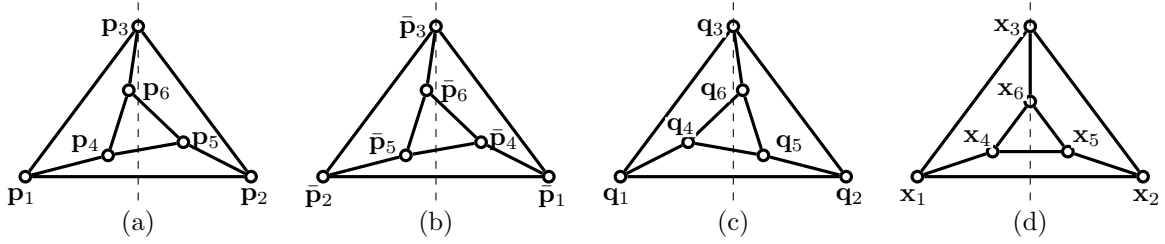
\begin{figure}[htp]
\begin{center}
   \begin{tikzpicture}[very thick,scale=0.5]
\tikzstyle{every node}=[circle, draw=black, fill=white, inner sep=0pt, minimum width=4pt];

\node [rectangle,draw=white, fill=white] (a) at (-3.1,-1.2) {$\p_1$}; 
\node [rectangle,draw=white, fill=white] (a) at (3.1,-1.2) {$\p_2$};
\node [rectangle,draw=white, fill=white] (a) at (-0.6,3.3) {$\p_3$};
\node [rectangle,draw=white, fill=white] (a) at (0.5,1.6) {$\p_6$};
\node [rectangle,draw=white, fill=white] (a) at (-1.4,0.2) {$\p_4$};
\node [rectangle,draw=white, fill=white] (a) at (1.8,0.3) {$\p_5$};

        \path (-0.8,-0.1) node (p1) {} ;
        \path (1.2,0.25) node (p2) {} ;
        \path (-0.25,1.63333) node (p3) {} ;
                
         \path (-3,-0.66666) node (p11) {} ;
        \path (3,-0.66666) node (p22) {} ;
        \path (0,3.333333) node (p33) {} ;
           
        \draw (p1)  --  (p2);
         \draw (p1)  --  (p3);
        \draw (p3)  --  (p2);
        \draw (p11)  --  (p22);
         \draw (p11)  --  (p33);
        \draw (p33)  --  (p22);
         \draw (p1)  --  (p11);
         \draw (p2)  --  (p22);
        \draw (p3)  --  (p33);
\draw[thin, dashed] (0,-1.3)--(0,4);

         \node [rectangle,draw=white, fill=white] (a) at (0,-1.9) {(a)};        
                  \end{tikzpicture}
                  \hspace{0.2cm}
              \begin{tikzpicture}[very thick,scale=0.5]
\tikzstyle{every node}=[circle, draw=black, fill=white, inner sep=0pt, minimum width=4pt];

\node [rectangle,draw=white, fill=white] (a) at (-3.1,-1.2) {$\bar \p_2$}; 
\node [rectangle,draw=white, fill=white] (a) at (3.1,-1.2) {$\bar \p_1$};
\node [rectangle,draw=white, fill=white] (a) at (-0.6,3.3) {$\bar \p_3$};
\node [rectangle,draw=white, fill=white] (a) at (0.5,1.6) {$\bar \p_6$};
\node [rectangle,draw=white, fill=white] (a) at (-1.4,0.2) {$\bar \p_5$};
\node [rectangle,draw=white, fill=white] (a) at (1.8,0.3) {$\bar \p_4$};

        \path (-0.8,-0.1) node (p1) {} ;
        \path (1.2,0.25) node (p2) {} ;
        \path (-0.25,1.63333) node (p3) {} ;
                
         \path (-3,-0.66666) node (p11) {} ;
        \path (3,-0.66666) node (p22) {} ;
        \path (0,3.333333) node (p33) {} ;
           
        \draw (p1)  --  (p2);
         \draw (p1)  --  (p3);
        \draw (p3)  --  (p2);
        \draw (p11)  --  (p22);
         \draw (p11)  --  (p33);
        \draw (p33)  --  (p22);
         \draw (p1)  --  (p11);
         \draw (p2)  --  (p22);
        \draw (p3)  --  (p33);
\draw[thin, dashed] (0,-1.3)--(0,4);

         \node [rectangle,draw=white, fill=white] (a) at (0,-1.9) {(b)};      
                  \end{tikzpicture}
                              \hspace{0.2cm}
             \begin{tikzpicture}[very thick,scale=0.5]
\tikzstyle{every node}=[circle, draw=black, fill=white, inner sep=0pt, minimum width=4pt];
        \path (0.8,-0.1) node (p2) {} ;
        \path (-1.2,0.25) node (p1) {} ;
        \path (0.25,1.63333) node (p3) {} ;
                
         \path (-3,-0.66666) node (p11) {} ;
        \path (3,-0.66666) node (p22) {} ;
        \path (0,3.333333) node (p33) {} ;
           
        \draw (p1)  --  (p2);
         \draw (p1)  --  (p3);
        \draw (p3)  --  (p2);
        \draw (p11)  --  (p22);
         \draw (p11)  --  (p33);
        \draw (p33)  --  (p22);
         \draw (p1)  --  (p11);
         \draw (p2)  --  (p22);
        \draw (p3)  --  (p33);
\draw[thin, dashed] (0,-1.3)--(0,4);

\node [rectangle,draw=white, fill=white] (a) at (-3.1,-1.2) {$\q_1$}; 
\node [rectangle,draw=white, fill=white] (a) at (3.1,-1.2) {$\q_2$};
\node [rectangle,draw=white, fill=white] (a) at (-0.6,3.3) {$\q_3$};
\node [rectangle,draw=white, fill=white] (a) at (-0.5,1.6) {$\q_6$};
\node [rectangle,draw=white, fill=white] (a) at (-1.4,0.65) {$\q_4$};
\node [rectangle,draw=white, fill=white] (a) at (1.5,0.2) {$\q_5$};

         \node [rectangle,draw=white, fill=white] (a) at (0,-1.9) {(c)};        
                  \end{tikzpicture}
                  \hspace{0.2cm}        
  \begin{tikzpicture}[very thick,scale=0.5]
\tikzstyle{every node}=[circle, draw=black, fill=white, inner sep=0pt, minimum width=4pt];

        \path (-1,0) node (p1) {} ;
        \path (1,0) node (p2) {} ;
        \path (0,1.33333) node (p3) {} ;
        
           \node [draw=white, fill=white] (a) at (-0.5,1.7) {};
          \node [draw=white, fill=white] (a) at (-1,-1.1) {};  
         \node [draw=white, fill=white] (a) at (1.7,0.2) {};
               
         \path (-3,-0.66666) node (p11) {} ;
        \path (3,-0.66666) node (p22) {} ;
        \path (0,3.333333) node (p33) {} ;

        \draw (p1)  --  (p2);
         \draw (p1)  --  (p3);
        \draw (p3)  --  (p2);
        \draw (p11)  --  (p22);
         \draw (p11)  --  (p33);
        \draw (p33)  --  (p22);
         \draw (p1)  --  (p11);
         \draw (p2)  --  (p22);
        \draw (p3)  --  (p33);
        \draw[thin, dashed] (0,-1.3)--(0,4);
             
        % \draw[dashed, thin](0,-1.4)--(0,3.8); 

\node [rectangle,draw=white, fill=white] (a) at (-3.1,-1.2) {$\x_1$}; 
\node [rectangle,draw=white, fill=white] (a) at (3.1,-1.2) {$\x_2$};
\node [rectangle,draw=white, fill=white] (a) at (-0.6,3.3) {$\x_3$};
\node [rectangle,draw=white, fill=white] (a) at (-0.5,1.6) {$\x_6$};
\node [rectangle,draw=white, fill=white] (a) at (-1.4,0.35) {$\x_4$};
\node [rectangle,draw=white, fill=white] (a) at (1.5,0.35) {$\x_5$};

            \node [rectangle,draw=white, fill=white] (a) at (0,-1.9) {(d)};
      \end{tikzpicture}

                \end{center}
\caption{Generating a symmetrically averaged framework $(G,\x)$ from a non-symmetric framework $(G,\p)$ for the case of the reflection group $\mathcal{C}_s$ in the plane: $(G,\bar\p)$ in (b) is obtained from $(G,\p)$ in (a) by relabelling. $(G,\q)$ in (c) is obtained from $(G,\bar\p)$ in (b) by reflecting in the vertical mirror line, so that $\q=(\gamma \cdot \p)$. Finally,  $(G,\x)$ in (d) is the average of (a) and (c) and has mirror symmetry. }
\label{fig:avgex}
\end{figure}

\subsection{Symmetric averaging in general}

Now we generalise the symmetric averaging procedure to any 
group.  We begin with a framework $(G,\p)$, not necessarily 
symmetric.  Since our target is a symmetric framework, we 
need to fix the data $(\Gamma,\tau)$, where $\Gamma < \Aut(G)$
is a subgroup of the combinatorial symmetry group of $G$ that admits a faithful
representation $\tau : \Gamma\to \Euc_0(d)$.  
Let $\gamma\in \Gamma$ be given.  We define a new configuration 
$\gamma\cdot \p$ by the relation 
\[
    (\gamma\cdot\p)_i = \tau(\gamma^{-1})\p_{\gamma(i)}
    \qquad \text{for all $i\in V$}
\]
The geometric intuition underlying the definition is from the reflection 
example, and also the idea that, if $\p$ was already $(\Gamma,\tau)$-symmetric,
then $\gamma\cdot \p$ should be equal to $\p$.  This is indeed the case, because 
the compatibility relation $\tau(\gamma)\p_i = \p_{\gamma(i)}$ implies
\[
    (\gamma\cdot\p)_i = 
    \tau(\gamma^{-1})\tau(\gamma)\p_i = \p_i  
\]
With this, we are ready to define the symmetric averaging operator $A$
for a general pair $(\Gamma,\tau)$.  It is defined by the relation 
\[
    A\p = \frac{1}{|\Gamma|}\sum_{\gamma\in \Gamma} \gamma\cdot \p
\]

We illustrate the general symmetric averaging procedure with the example shown in Figure~\ref{fig:avgexgen}. Here $\Gamma<\textrm{Aut}(G)$ is isomorphic to $\mathbb{Z}_3$ and generated by the automorphism $(1,2,3)(4,5,6)$. Moreover, $\tau(\Gamma)$ is the three-fold rotational group $\mathcal{C}_3$ generated by a rotation about the origin by $2\pi/3$ in counterclockwise direction. 

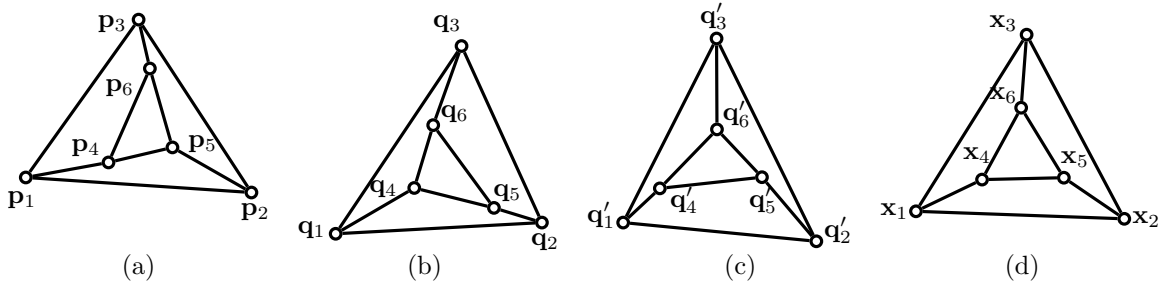
\begin{figure}[htp]
\begin{center}
   \begin{tikzpicture}[very thick,scale=0.5]
\tikzstyle{every node}=[circle, draw=black, fill=white, inner sep=0pt, minimum width=4pt];

\node [rectangle,draw=white, fill=white] (a) at (-3.1,-1.2) {$\p_1$}; 
\node [rectangle,draw=white, fill=white] (a) at (3.1,-1.5) {$\p_2$};
\node [rectangle,draw=white, fill=white] (a) at (-0.7,3.5) {$\p_3$};
\node [rectangle,draw=white, fill=white] (a) at (-0.5,1.8) {$\p_6$};
\node [rectangle,draw=white, fill=white] (a) at (-1.4,0.2) {$\p_4$};
\node [rectangle,draw=white, fill=white] (a) at (1.7,0.3) {$\p_5$};

        \path (-0.8,-0.2) node (p4) {} ;
        \path (0.9,0.2) node (p5) {} ;
        \path (0.3,2.3) node (p6) {} ;
                
         \path (-3,-0.6) node (p1) {} ;
        \path (3,-1) node (p2) {} ;
        \path (0,3.6) node (p3) {} ;
           
        \draw (p4)  --  (p5);
         \draw (p4)  --  (p6);
        \draw (p6)  --  (p5);
        \draw (p1)  --  (p2);
         \draw (p1)  --  (p3);
        \draw (p3)  --  (p2);
         \draw (p4)  --  (p1);
         \draw (p5)  --  (p2);
        \draw (p6)  --  (p3);
                
         \node [rectangle,draw=white, fill=white] (a) at (0,-3) {(a)};        
                  \end{tikzpicture}
                  \hspace{0.1cm}
              \begin{tikzpicture}[very thick,scale=0.5]
\tikzstyle{every node}=[circle, draw=black, fill=white, inner sep=0pt, minimum width=4pt];

\node [rectangle,draw=white, fill=white] (a) at (-3,-2) {$\q_1$}; 
\node [rectangle,draw=white, fill=white] (a) at (3.2,-2.3) {$\q_2$};
\node [rectangle,draw=white, fill=white] (a) at (0.6,3.5) {$\q_3$};
\node [rectangle,draw=white, fill=white] (a) at (0.8,1.1) {$\q_6$};
\node [rectangle,draw=white, fill=white] (a) at (-1.1,-0.9) {$\q_4$};
\node [rectangle,draw=white, fill=white] (a) at (2.2,-1) {$\q_5$};

        \path (0.2268,0.7928) node (p4) {} ;
        \path (-0.2768,-0.879) node (p5) {} ;
        \path (1.8418,-1.409) node (p6) {} ;
                
         \path (0.9804,2.898) node (p1) {} ;
        \path (-2.366,-2.098) node (p2) {} ;
        \path (3.1176,-1.8) node (p3) {} ;
           
        \draw (p4)  --  (p5);
         \draw (p4)  --  (p6);
        \draw (p6)  --  (p5);
        \draw (p1)  --  (p2);
         \draw (p1)  --  (p3);
        \draw (p3)  --  (p2);
         \draw (p4)  --  (p1);
         \draw (p5)  --  (p2);
        \draw (p6)  --  (p3);
                
         \node [rectangle,draw=white, fill=white] (a) at (0,-3) {(b)};        
                  \end{tikzpicture}  
\hspace{0.1cm}
              \begin{tikzpicture}[very thick,scale=0.5]
\tikzstyle{every node}=[circle, draw=black, fill=white, inner sep=0pt, minimum width=4pt];

\node [rectangle,draw=white, fill=white] (a) at (-3.7,-1.6) {$\q'_1$}; 
\node [rectangle,draw=white, fill=white] (a) at (2.6,-2) {$\q'_2$};
\node [rectangle,draw=white, fill=white] (a) at (-0.7,3.7) {$\q'_3$};
\node [rectangle,draw=white, fill=white] (a) at (-0.1,1.1) {$\q'_6$};
\node [rectangle,draw=white, fill=white] (a) at (-1.5,-1.2) {$\q'_4$};
\node [rectangle,draw=white, fill=white] (a) at (0.6,-1.2) {$\q'_5$};

        \path (0.573,-0.5928) node (p4) {} ;
        \path (-0.623,0.679) node (p5) {} ;
        \path (-2.1418,-0.89) node (p6) {} ;
                
         \path (2.019,-2.298) node (p1) {} ;
        \path (-0.634,3.098) node (p2) {} ;
        \path (-3.117,-1.8) node (p3) {} ;
           
        \draw (p4)  --  (p5);
         \draw (p4)  --  (p6);
        \draw (p6)  --  (p5);
        \draw (p1)  --  (p2);
         \draw (p1)  --  (p3);
        \draw (p3)  --  (p2);
         \draw (p4)  --  (p1);
         \draw (p5)  --  (p2);
        \draw (p6)  --  (p3);
                
         \node [rectangle,draw=white, fill=white] (a) at (0,-3) {(c)};        
                  \end{tikzpicture}  
                \hspace{0.1cm}
              \begin{tikzpicture}[very thick,scale=0.5]
\tikzstyle{every node}=[circle, draw=black, fill=white, inner sep=0pt, minimum width=4pt];

\node [rectangle,draw=white, fill=white] (a) at (-3.4,-1.5) {$\x_1$}; 
\node [rectangle,draw=white, fill=white] (a) at (3.3,-1.7) {$\x_2$};
\node [rectangle,draw=white, fill=white] (a) at (-0.5,3.5) {$\x_3$};
\node [rectangle,draw=white, fill=white] (a) at (-0.5,1.6) {$\x_6$};
\node [rectangle,draw=white, fill=white] (a) at (-1.2,0) {$\x_4$};
\node [rectangle,draw=white, fill=white] (a) at (1.4,-0.1) {$\x_5$};

         \path (-2.827,-1.5) node (p1) {} ;
         \path (2.712,-1.7) node (p2) {} ;
       \path (0.115,3.2) node (p3) {} ;

\path (-1.07,-0.656) node (p4) {} ;
         \path (1.105,-0.606) node (p5) {} ;
       \path (-0.03,1.256) node (p6) {} ;

   \draw (p4)  --  (p5);
         \draw (p4)  --  (p6);
        \draw (p6)  --  (p5);
        \draw (p1)  --  (p2);
         \draw (p1)  --  (p3);
        \draw (p3)  --  (p2);
         \draw (p4)  --  (p1);
         \draw (p5)  --  (p2);
        \draw (p6)  --  (p3);
                
         \node [rectangle,draw=white, fill=white] (a) at (0,-3) {(d)};        
                  \end{tikzpicture}  
                 \end{center}
\caption{Generating a symmetrically averaged framework $(G,\x)$ with $\mathcal{C}_3$ symmetry from a non-symmetric framework $(G,\p)$: the framework in (b) is $(G,\q)$, where $\q=(\gamma \cdot \p)$; the framework in (c) is $(G,\q')$, where $\q'=(\gamma^2 \cdot \p)$. Finally, the framework $(G,\x)$ in (d) is the average $(G,A\p)$ of the ones in (a), (b) and (c) and clearly has $\mathcal{C}_3$ symmetry.}
\label{fig:avgexgen}
\end{figure}

\section{Properties of the averaging map}\label{sec:symavprop} 
Now we are ready to show that the map $A$ does, in fact, 
find the closest $(\Gamma,\tau)$-symmetric framework to $(G,\p)$, 
in a sense we now make precise.
We show that $A$ is the orthogonal projection onto the subspace of configurations 
that are $(\Gamma,\tau)$-symmetric.  To discuss orthogonality, we 
need an inner product on configurations of $n$ points.  We use 
the one induced by the Euclidean inner product on $\RR^d$:
\[
    \iprod{\p}{\q} = \sum_{i=1}^n \iprod{\p_i}{\q_i}
\]
That $A$ is an orthogonal projection follows from the facts that: $A$ is an idempotent 
linear map (this establishes that $A$ is a linear projection to its image) and, 
moreover, self-adjoint (which implies that its kernel and image are orthogonal 
complements).

Let us start with linearity.  This is quick from the 
linearity of the isometries $\tau(\gamma)$.  Indeed, if 
$\alpha\in \RR$, and configurations $\p$ and $\q$ are 
given, we have, for each $i$:
\[
    (A(\alpha \p + \q))_i = 
        \sum_{\gamma\in \Gamma} \tau(\gamma^{-1})(\alpha \p_{\gamma(i)} + \q_{\gamma(i)})
        = \left(\alpha \sum_{\gamma\in \Gamma} \tau(\gamma^{-1})\p_{\gamma(i)}\right) + 
        \left(\sum_{\gamma\in \Gamma} \tau(\gamma^{-1})\q_{\gamma(i)}\right) =
        \alpha A(\p)_i + A(\q)_i
\]
Now we show that $A$ is the identity on symmetric configurations.  Recall that 
if $\p$ is already $(\Gamma,\tau)$-symmetric, then $\gamma\cdot \p = \p$ for 
each $\gamma\in \Gamma$. So we get, for a $(\Gamma,\tau)$-symmetric $\p$:
\[
    A\p = \frac{1}{|\Gamma|}\sum_{\gamma\in \Gamma} \gamma\cdot \p
        = \frac{1}{|\Gamma|}\sum_{\gamma\in \Gamma} \p = \p
\]
This verification, in particular, implies that 
every symmetric configuration is in the image of $A$.

For the reverse inclusion, we need to show that $A\p$ is, for 
any $\p$, $(\Gamma,\tau)$-symmetric.  With the previous observation, this tells
us that $A$ is idempotent and that its image is exactly the 
$(\Gamma,\tau)$-symmetric configurations.  
To this end, let $\eta\in \Gamma$ and a 
vertex $i$ be given.  Now compute 
\[
    (A\p)_{\eta(i)} = 
    \frac{1}{|\Gamma|} \sum_{\gamma\in \Gamma} \tau(\gamma^{-1})\p_{(\gamma\eta)(i)}
    =  \frac{1}{|\Gamma|}
    \sum_{\gamma\eta^{-1}\in \Gamma}\tau(\eta\gamma^{-1})\p_{\gamma(i)}
    = \tau(\eta)\frac{1}{|\Gamma|} \sum_{\gamma\in \Gamma} \tau(\gamma^{-1})\p_{\gamma(i)} = \tau(\eta)(A\p)_i
\]
where we used the ``group action'' property $\gamma(\eta(i)) = (\gamma\eta)(i)$
and  that $\Gamma \eta = \Gamma$ to reindex the sum.  As $\eta$ and $i$ 
were arbitrary, we see that $A\p$ is $(\Gamma,\tau)$-symmetric (fulfilling a 
promise from the motivating discussion in the process).
Thus, $A$ is indeed a linear projection. 

The remaining step is to show that, for arbitrary configurations 
$\p$ and $\q$, 
$$\langle A\p,\q\rangle=\langle \p,A\q\rangle$$
Letting $\p$ and $\q$ be given, we compute 
\[
\begin{split}
    \iprod{A\p}{\q} & = \sum_{i=1}^n \iprod{(A\p)_i}{\q_i} \\
    & = \sum_{i=1}^n \iprod{\frac{1}{|\Gamma|}\sum_{\gamma\in \Gamma} \tau(\gamma^{-1})\p_{\gamma(i)}}{\q_i} \\
    & = \sum_{i=1}^n \frac{1}{|\Gamma|} \sum_{\gamma\in \Gamma} \iprod{\tau(\gamma^{-1})\p_{\gamma(i)}}{\q_i} \\ 
    & =\frac{1}{|\Gamma|}\sum_{i=1}^n \sum_{\gamma\in \Gamma} \iprod{\p_{\gamma(i)}}{\tau(\gamma)\q_i}
\end{split}
\]
where we have repeatedly used bilinearity of the inner product and, in the last step, that $\tau(\gamma)$
is orthogonal, so that 
\[
    \iprod{\tau(\gamma^{-1})\p_{\gamma(i)}}{\q_i} = \iprod{\tau(\gamma)\tau(\gamma^{-1})\p_{\gamma(i)}}{\tau(\gamma)\q_i}
    = \iprod{\p_{\gamma(i)}}{\tau(\gamma)\q_i}
\]
because $\tau(\gamma)\tau(\gamma^{-1})$ must be the identity.  Continuing our main computation, we 
reorder sums and reindex
\[
\begin{split}
    \frac{1}{|\Gamma|}\sum_{i=1}^n \sum_{\gamma\in \Gamma} \iprod{\p_{\gamma(i)}}{\tau(\gamma)\q_i} 
    & =   \frac{1}{|\Gamma|} \sum_{\gamma\in \Gamma} \sum_{i=1}^n \iprod{\p_{\gamma(i)}}{\tau(\gamma)\q_i} \\
    & = \frac{1}{|\Gamma|} \sum_{\gamma\in \Gamma} 
    \sum_{\gamma^{-1}(i)=1}^n \iprod{\p_{\gamma(\gamma^{-1}(i))}}{\tau(\gamma)\q_{\gamma^{-1}(i)}} \\
    & = \frac{1}{|\Gamma|} \sum_{\gamma\in \Gamma} 
    \sum_{\gamma^{-1}(i)=1}^n \iprod{\p_{i}}{\tau(\gamma)\q_{\gamma^{-1}(i)}} \\ 
    & =  \frac{1}{|\Gamma|} \sum_{i=1}^n \sum_{\gamma\in \Gamma} 
    \iprod{\p_{i}}{\tau(\gamma^{-1})\q_{\gamma(i)}} \\
    & = 
        \iprod{\p}{ \sum_{i=1}^n \frac{1}{|\Gamma|} \sum_{\gamma\in \Gamma} \tau(\gamma^{-1})\q_{\gamma(i)}} \\
    & = 
        \sum_{i=1}^n \iprod{\p_i}{(A\q)_i} \\
    & = \iprod{\p}{A\q}
\end{split}
\]
This  computation shows that $A$ is self-adjoint, so 
we have verified all the properties we needed to make it an orthogonal 
projection.

\subsection{Connections to representation-theoretic approaches}

We have given a direct derivation of the averaging procedure, since it has a geometric 
intuition from frameworks and doesn't require any theory.  That said, we can 
interpret what happens above in a very general setting.  If $G$ is a 
finite group and $\rho : G \to \operatorname{GL}(V)$ is any linear representation 
for some $F$-vector space $V$ and $|G|$ is invertible in $F$, then 
the averaging operator $A_\rho$ defined by
\[
    v\mapsto |G|^{-1}\sum_{g\in G} \rho(g)v
\]
is well-known to be a linear projection onto the invariant subspace 
\[
    V^G = \{ v : \rho(g)v = v \quad \text{for all $g\in G$}\}
\]
To apply the general theory, we observe that the  map 
\[
    \p \mapsto \sum_{i=1}^n \e_i \otimes \p_i,
\]
where $\e_i$ is the $i$th canonical basis vector, is a linear isomorphism between 
the space of $n$-point configurations in $\RR^d$ and the 
tensor product $\RR^n \otimes \RR^d$.
The inner product is defined on the basis vectors of 
$\RR^n\otimes \RR^d$ by
\[
    \iprod{\e_i\otimes \e_j}{\e_k\otimes \e_\ell} = 1 \qquad 
    \text{if and only if }\qquad i = k\quad \text{and}\quad j = \ell
\]
and linear extension.   Finally, 
from $\Gamma$ and $\tau$, we end up with a linear action $\rho : \Gamma\to \operatorname{GL}(\RR^d \otimes \RR^n)$
defined by 
\[
    \rho(\gamma)(\e_i\otimes \p_i)  =  \e_{\gamma(i)} \otimes \tau(\gamma)\p_i
\]
It's a quick check that this preserves the inner product, so averaging over this $\rho$ 
gives an $A_\rho$ which is an orthogonal projection, which is what we verified 
above.  With respect to the 
standard basis, the matrices of the operators $\rho(\gamma)$ are 
\[
    \operatorname{Mat}(\rho(\gamma)) =   V_\gamma \otimes \tau(\gamma) \in \RR^{dn\times dn}
\]
 where the ``$\otimes$'' is the Kronecker product, and $V_\gamma$ is the vertex permutation matrix 
\[
    V_\gamma = \begin{pmatrix}
        \e_{\gamma(1)} &  \e_{\gamma(2)} & \cdots & \e_{\gamma(n)}
    \end{pmatrix}.
\]
These are the ``external representation matrices'' introduced in \cite{kangwai2000,fow00}. Finally, we have 
\[
    \operatorname{Mat}(A) = \frac{1}{|\Gamma|}\sum_{\gamma\in \Gamma} V_\gamma\otimes \tau(\gamma),
\]
which is consistent with our previous derivation.

\section{Types of self-stress: local vs. extensive}\label{sec:stresses}

In this section we establish mathematically rigorous definitions of different types of self-stress that play an important role in practical engineering applications, such as gridshell design. In these applications, self-stresses that are localised, i.e., have non-zero stress coefficients only on some isolated part of the framework, are of significantly less interest  than self-stresses with more ``extensive support".

We will  propose a hierarchy of definitions for local and extensive self-stresses, starting with  notions of local and extensive that depend on the group $\Gamma < \Aut(G)$ (see Section~\ref{subsec:sstypes}). In Section~\ref{subsec:sseamp} we will then investigate the effect of the symmetric averaging map on these types of self-stresses. Finally, in Section~\ref{subsec:ssext} we will introduce a symmetry independent notion of extensiveness for self-stresses.

\subsection{$\Gamma$-localised and $\Gamma$-extensive self-stresses}\label{subsec:sstypes}

In the sequel, we fix a graph $G$ and $\Gamma < \Aut(G)$.  For the moment, $\tau$ plays less of a role. We propose the following natural notion of a ``$\Gamma$-localised" self-stress.

The \emph{(stress) support} of a self-stress $\mathbf \omega$ of a framework $(G,\p)$ is the set of edges $e$ of $G$ that have a non-zero stress-coefficient $\mathbf \omega_e$. Given a graph $G$ and $\Gamma<\textrm{Aut}(G)$,  the \emph{$\Gamma$-orbit} (or just \emph{orbit}) of a vertex $i$ (edge $e$) of $G$ is  $\Gamma i=\{\gamma(i)|\, \gamma\in \Gamma\}$ ($\Gamma e=\{\gamma(e)|\, \gamma\in \Gamma\}$, respectively). The \emph{$\Gamma$-orbit} (or simply \emph{orbit}) of a subgraph $H$ of $G$ is the subgraph $\Gamma H$ of $G$ consisting of the elements of the orbits of vertices and edges of $H$.

\begin{definition} (Strongly $\Gamma$-localised self-stress)
    Given a graph $G$ with $\Gamma < \Aut(G)$ and a 
 framework  $(G,\p)$, a self-stress is called \emph{strongly $\Gamma$-localised} if it is only supported on a subgraph $H$ of $G$ whose $\Gamma$-orbit consists of $|\Gamma|$ vertex-disjoint copies of  $H$.
\end{definition}
Informally, a strongly $\Gamma$-localised self-stress is supported on only one of the ``copies'' of $H$.  Different copies may have the 
same self-stress, possibly different self-stresses, or none at all, depending on the properties of $H$ and the 
induced frameworks on the copies.

With an eye to defining ``$\Gamma$-extensive", we describe a natural relaxation  of $\Gamma$-local that includes strongly $\Gamma$-local.
\begin{definition} (Weakly $\Gamma$-localised self-stress)
    Given a graph $G$ with $\Gamma < \Aut(G)$ and a 
framework  $(G,\p)$, a self-stress is called \emph{weakly $\Gamma$-localised} if for every edge orbit, the stress support does not contain all the edges of the orbit.
\end{definition}
We can see that if $\Gamma$ is non-trivial, then strongly $\Gamma$-localised implies weakly $\Gamma$-localised, since a strongly $\Gamma$-localised self-stress can contain only one edge per orbit.  (If the orbit is a single edge, then the $|\Gamma|$ copies of $H$ are not disjoint, provided that $\Gamma$ is non-trivial, which is why that condition was in the definition of strongly $\Gamma$-localised self-stress.)

The preceding definition lets us bootstrap a notion of a $\Gamma$-extensive self-stress, which is one that cannot be obtained 
by linear combinations of weakly $\Gamma$-localised self-stresses.
\begin{definition} ($\Gamma$-extensive self-stress)
    Given a graph $G$ with $\Gamma < \Aut(G)$ and a 
framework  $(G,\p)$, a self-stress is called \emph{$\Gamma$-extensive} if it is not in the linear span of its weakly $\Gamma$-localised self-stresses.
\end{definition}

It is important to keep in mind here that the notions of (weakly or strongly) $\Gamma$-localised and $\Gamma$-extensive self-stress depend on the group $\Gamma$. If the group $\Gamma$ gets larger, then we typically have fewer edge orbits and larger sizes of  orbits. So by increasing the size of the group, we can turn extensive to weakly or strongly localised self-stresses. (In the extreme but uninteresting case when $\Gamma$ is the trivial group, every non-zero self-stress is trivially $\Gamma$-extensive.)

In practical applications, such as gridshell design, one often deals with an initial form diagram (i.e. planar framework) which exhibits no symmetry, but whose configuration of points is ``close" to a symmetric configuration. This then suggests a $\Gamma$ (and $\tau$) for carrying out the self-stress analysis.

\subsection{Self-stress behaviour under the averaging map}\label{subsec:sseamp}

We will now investigate the changes in $\Gamma$-localised and $\Gamma$-extensive self-stresses  under the symmetric averaging map. By looking at some simple examples, we will see that,  in general, the averaging procedure can have a large variety of different effects on the self-stresses.

\medskip

\noindent \textbf{$\Gamma$-localised self-stresses.} The averaging map can both create and destroy $\Gamma$-localised self-stresses, as illustrated by the  examples in Figure~\ref{fig:1}. For these examples, the group $\Gamma$ is the group $\mathbb{Z}_2$ of order 2  and $\tau(\Gamma)$ is the reflection group $\mathcal{C}_s$ in the plane.

\begin{figure}[htp]
\begin{center}
\begin{tikzpicture}[very thick,scale=1]
   \tikzstyle{every node}=[circle, draw=black, fill=white, inner sep=0pt, minimum width=4pt];

   \path (-1,-0.5) node (p3)  {} ;
 \path (-1,0.5) node (p1)  {} ;
    \path (1,-0.5) node (p4)  {} ;
   \path (1,0.5) node (p2)  {} ;
\path (-2,0) node (l)  {} ;
   \path (1.1,0) node (r)  {} ;
\draw[thin, dashed] (0,-1)--(0,1);

\draw (p1)  --  (p2);
        \draw (p1)  --  (p3);
\draw (p4)  --  (p2);
        \draw (p4)  --  (p3);
\draw (l)  --  (p1);
        \draw (l)  --  (p3);
\draw (r)  --  (p2);
        \draw (r)  --  (p4);

 \node [draw=white, fill=white] (b) at (0,-1.3) {(a)};
\end{tikzpicture}
\hspace{0.2cm}
\begin{tikzpicture}[very thick,scale=1]
   \tikzstyle{every node}=[circle, draw=black, fill=white, inner sep=0pt, minimum width=4pt];

   \path (-1,-0.5) node (p3)  {} ;
 \path (-1,0.5) node (p1)  {} ;
    \path (1,-0.5) node (p4)  {} ;
   \path (1,0.5) node (p2)  {} ;
  \path (-1.5,0) node (l)  {} ;
   \path (1.5,0) node (r)  {} ;

\draw (p1)  --  (p2);
        \draw (p1)  --  (p3);
\draw (p4)  --  (p2);
        \draw (p4)  --  (p3);
\draw (l)  --  (p1);
        \draw (l)  --  (p3);
\draw (r)  --  (p2);
        \draw (r)  --  (p4);

\draw[thin, dashed] (0,-1)--(0,1);
   
 \node [draw=white, fill=white] (b) at (0,-1.3) {(b)};
\end{tikzpicture}
\hspace{1cm}
\begin{tikzpicture}[very thick,scale=1]
   \tikzstyle{every node}=[circle, draw=black, fill=white, inner sep=0pt, minimum width=4pt];

   \path (-1,-0.5) node (p3)  {} ;
 \path (-1,0.5) node (p1)  {} ;
    \path (1,-0.5) node (p4)  {} ;
   \path (1,0.5) node (p2)  {} ;
\path (-1.5,0) node (l)  {} ;
   \path (0.5,0) node (r)  {} ;

\draw (p1)  --  (p2);
        \draw (p1)  --  (p3);
\draw (p4)  --  (p2);
        \draw (p4)  --  (p3);
\draw (l)  --  (p1);
        \draw (l)  --  (p3);
\draw (r)  --  (p2);
        \draw (r)  --  (p4);

\draw[thin, dashed] (0,-1)--(0,1);
   
 \node [draw=white, fill=white] (b) at (0,-1.3) {(c)};
\end{tikzpicture}
\hspace{0.2cm}
\begin{tikzpicture}[very thick,scale=1]
   \tikzstyle{every node}=[circle, draw=black, fill=white, inner sep=0pt, minimum width=4pt];

   \path (-1,-0.5) node (p3)  {} ;
 \path (-1,0.5) node (p1)  {} ;
    \path (1,-0.5) node (p4)  {} ;
   \path (1,0.5) node (p2)  {} ;
  \path (-1.1,0) node (l)  {} ;
   \path (1.1,0) node (r)  {} ;

\draw (p1)  --  (p2);
        \draw (p1)  --  (p3);
\draw (p4)  --  (p2);
        \draw (p4)  --  (p3);
\draw (l)  --  (p1);
        \draw (l)  --  (p3);
\draw (r)  --  (p2);
        \draw (r)  --  (p4);
\draw[thin, dashed] (0,-1)--(0,1);
   
 \node [draw=white, fill=white] (b) at (0,-1.3) {(d)};
\end{tikzpicture}
\end{center}
\vspace{-0.6cm} \caption{$\mathbb{Z}_2$-localised self-stresses may appear or disappear under symmetric averaging. (a) A framework in the plane with one strongly $\mathbb{Z}_2$-localised self-stress due to the collinear triangle on the right (the centre vertex of the triangle is drawn slightly offset for visibility). The symmetrically averaged framework with reflection symmetry in (b) has no self-stress.  (c) A framework in the plane with no self-stresses whose symmetrically averaged framework under reflection symmetry in (d) has two strongly $\mathbb{Z}_2$-localised self-stresses due to the two collinear triangles.} \label{fig:1}
\end{figure}
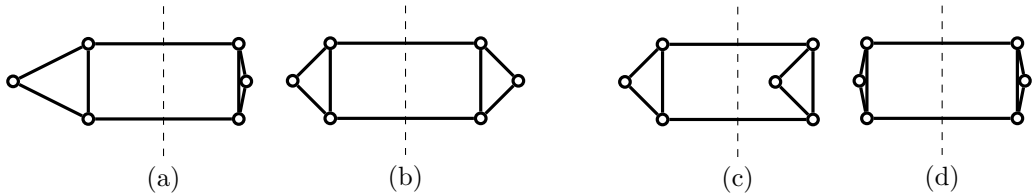

Similar and less degenerate examples can easily be constructed by replacing the two triangles in Figure~\ref{fig:1} by prism graphs (such as the planar cube graph $Q_3$, for example, whose  self-stressed configurations in the plane are well known \cite{wwstresses,NSW21}).

The following result shows that ``generically'' $\Gamma$-symmetric averaging cancels all strongly $\Gamma$-localised self-stresses.

A $(\Gamma,\tau)$-symmetric framework $(G,\p)$ is called \emph{$(\Gamma,\tau)$-generic} if $\textrm{rank}R(\p)=\textrm{max}\{\textrm{rank} R(\p')|\, \p'\in \mathbb{R}^{d|V|} \textrm{ and } (G,\p') \textrm{ is }(\Gamma,\tau)\textrm{-symmetric}\}$. It is easy to see that ``almost all" realisations of $G$ that are  $(\Gamma,\tau)$-symmetric are $(\Gamma,\tau)$-generic (in the sense that the $(\Gamma,\tau)$-generic frameworks are open and dense in the set of $(\Gamma,\tau)$-symmetric frameworks).

\begin{theorem}
    Let $\Gamma$ be a non-trivial group and let $G$ be a  graph whose $(\Gamma,\tau)$-generic realisations as a framework have no non-trivial self-stress. Suppose the (not necessarily symmetric) framework $(G,\p)$ has a strongly $\Gamma$-localised self-stress $\mathbf \omega$ which is supported on a subgraph $H$ of $G$. Then for almost all positions of the vertices of a copy of $H$ from the $\Gamma$-orbit of $H$, the averaging map for $\Gamma$ and $\tau$ will yield a $(\Gamma,\tau)$-symmetric framework that no longer has any non-trivial self-stress. 
\end{theorem}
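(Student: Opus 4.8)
The plan is to reduce the statement to a genericity claim about the linear space of symmetric configurations, and then to verify that claim. Write $H = H_1, H_2, \dots, H_{|\Gamma|}$ for the $|\Gamma|$ vertex-disjoint copies making up the orbit $\Gamma H$; the fact that there are exactly $|\Gamma|$ disjoint copies forces the setwise stabiliser of $H$ in $\Gamma$ to be trivial. Recall from Section~\ref{sec:symavprop} that the image of $A$ is the linear subspace $\cV \subseteq \RR^{dn}$ of $(\Gamma,\tau)$-symmetric configurations, so the averaged framework $(G,A\p)$ is automatically $(\Gamma,\tau)$-symmetric. By the hypothesis that the $(\Gamma,\tau)$-generic realisations of $G$ have no non-trivial self-stress, the set $\cZ \subseteq \cV$ of symmetric configurations carrying a self-stress (the locus where a maximal minor of $R$ drops rank) is a proper algebraic subvariety of the irreducible (because linear) variety $\cV$. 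Hence $\cV\setminus\cZ$ is Zariski-open and dense.

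First I would pin down which symmetric configurations are produced as the positions of one copy vary. Fix all of $\p$ except $\p|_{H_1}$, and regard the latter as the variable. For a vertex $i \in H_1$ we have $(A\p)_i = \tfrac{1}{|\Gamma|}\sum_{\gamma\in\Gamma}\tau(\gamma^{-1})\p_{\gamma(i)}$, and vertex-disjointness of the copies gives $\gamma(i)\in H_1$ only for $\gamma=\Id$; thus $\p|_{H_1}$ enters $(A\p)|_{H_1}$ solely through the identity term $\tfrac{1}{|\Gamma|}\p_i$, while every averaged position on $V\setminus\Gamma H$ is unaffected. Consequently $\p|_{H_1} \mapsto (A\p)|_{H_1}$ is an affine map with invertible linear part $\tfrac{1}{|\Gamma|}\,\Id$, so as $\p|_{H_1}$ ranges over all placements of $H$, the restriction $(A\p)|_{\Gamma H}$ ranges over all symmetric placements of the orbit while $(A\p)$ on the complement stays frozen. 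In other words, the reachable averaged configurations fill out an affine slice $\cL\subseteq\cV$ that is free in the $\Gamma H$-directions and pinned on $V\setminus\Gamma H$. This is exactly where vertex-disjointness (strong localisation) is essential.

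It then suffices to show $\cL\not\subseteq\cZ$: given this, $\cL\cap\cZ$ is a proper subvariety of the affine space $\cL$, hence has measure zero, so for almost all positions of $H_1$ the averaged framework lands in $\cV\setminus\cZ$ and has no non-trivial self-stress, which is the conclusion. \emph{The main obstacle is precisely the containment $\cL\not\subseteq\cZ$.} Because the complementary vertices are frozen at the particular positions $(A\p)|_{V\setminus\Gamma H}$, I cannot simply invoke a $(\Gamma,\tau)$-generic symmetric configuration, since that would move those vertices too; the danger to exclude is a self-stress of $(G,A\p)$ supported entirely on edges inside $V\setminus\Gamma H$, which no placement of $\Gamma H$ can remove.

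To overcome this I would argue by upper semicontinuity of $\rank R$ on $\cL$ together with the production of a single independent witness in $\cL$. Concretely, I would analyse a putative symmetric self-stress $\omega$ of $(G,A\p)$ with $\Gamma H$ in generic symmetric position (complement pinned) using the symmetry-adapted orbit rigidity matrix: the symmetry forces $\omega$ to be determined, up to the representation, along each edge orbit, and genericity of the $\Gamma H$-vertices should force $\omega$ to vanish on every edge orbit incident to $\Gamma H$, leaving a self-stress of the frozen complementary sub-framework. The crux is to rule out such a complementary self-stress; here I would invoke that $G$ itself, not merely its complement, is $(\Gamma,\tau)$-independent, descending to the induced symmetric subgraph and arguing that a surviving complementary self-stress would contradict independence of $G$ at a nearby generic symmetric configuration. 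Once one independent configuration in $\cL$ is exhibited, independence holds on a dense open subset of $\cL$, and the theorem follows.
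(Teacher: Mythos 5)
Your first two steps coincide with the paper's own proof: the paper likewise fixes one copy $H'$ of $H$, observes that perturbing $\p|_{V(H')}$ and then averaging sweeps out an open set of $(\Gamma,\tau)$-symmetric placements of $V(\Gamma H)$ while the averaged positions of $V\setminus V(\Gamma H)$ stay fixed, and then concludes from the hypothesis that $(\Gamma,\tau)$-generic realisations of $G$ are independent that almost every such perturbation yields an independent averaged framework. Your computation that $\p|_{V(H_1)}\mapsto (A\p)|_{V(H_1)}$ is affine with invertible linear part $\frac{1}{|\Gamma|}\Id$ (using vertex-disjointness of the copies, i.e.\ triviality of the stabiliser of $H$) is a cleaner and slightly stronger version of the paper's ``open neighbourhood'' statement, and that part is correct.

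Where you go beyond the paper is in flagging that the final step needs $\cL\not\subseteq\cZ$: because the averaged positions of $V\setminus V(\Gamma H)$ are pinned at the specific values $(A\p)|_{V\setminus V(\Gamma H)}$, density of the independence locus in the full symmetric configuration space does not by itself exclude a self-stress that survives for \emph{every} placement of the orbit $\Gamma H$, e.g.\ one supported on a degenerately placed sub-framework disjoint from $\Gamma H$. This is a genuine subtlety, and the paper's one-line appeal to genericity passes over it. However, your proposed resolution does not close the gap. A self-stress of the pinned complementary sub-framework does not ``contradict independence of $G$ at a nearby generic symmetric configuration'': passing to such a configuration moves the complementary vertices as well, which can destroy that self-stress, so no contradiction arises. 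The obstruction is real: nothing in the stated hypotheses prevents $\p$ from placing some subgraph disjoint from $\Gamma H$ so degenerately that its averaged image carries a self-stress, in which case $\cL\subseteq\cZ$ and the desired conclusion fails for that $\p$. Closing the argument requires an additional genericity assumption on $\p|_{V\setminus V(H')}$ (or on its averaged image) --- an assumption the paper's proof also uses implicitly. So the crux you correctly isolated remains unproved in your write-up.
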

\begin{proof}
     Let $H'$ be an element from the $\Gamma$-orbit of $H$. When we perturb the positions $\p|_{V(H')}$ of the vertices of $H'$ within an open neighbourhood $N$, then the symmetric averaging map $A$ for $\Gamma$ and $\tau$ yields a corresponding open neighbourhood $N'$ of $(\Gamma,\tau)$-symmetric configurations of the vertices of $\Gamma H$, which includes the $(\Gamma,\tau)$-symmetric configuration  $A\p|_{V(\Gamma H)}$. Any perturbation of the vertices outside $V(H')$ does not effect the self-stress properties of a realisation of $H'$.  Thus, since $(\Gamma,\tau)$-generic realisations of $G$ have no non-trivial self-stress, 
        almost all configurations $\q$ of $V(H')$ in $N$ have the property that  $A$ applied to $\q$ and $\p|_{V\setminus V(H')}$ yield frameworks with no non-trivial self-stress. 
\end{proof}

It is natural to ask how $\Gamma$-localised self-stresses typically appear in realisations of  independent and \emph{non-rigid} graphs with non-degenerate configurations (say configurations in general position). This seems to be worthy of further investigation in the future.

\medskip

\noindent \textbf{$\Gamma$-extensive self-stresses.} 
The averaging map can also  create, preserve, or destroy $\Gamma$-extensive self-stresses.

We again provide examples for the case when the group $\Gamma$ is the group $\mathbb{Z}_2$ of order 2  and $\tau(\Gamma)$ is the reflection group $\mathcal{C}_s$ in the plane. Figure~\ref{fig:prism}(a,b) shows that a $\Gamma$-extensive self-stress may be gained via symmetric averaging. Figure~\ref{fig:prism}(c,d) shows that a $\Gamma$-extensive self-stress may simply be preserved under the averaging map. 

\begin{figure}[htp]
\begin{center}
   \begin{tikzpicture}[very thick,scale=0.5]
\tikzstyle{every node}=[circle, draw=black, fill=white, inner sep=0pt, minimum width=4pt];
        \path (-0.8,-0.1) node (p1) {} ;
        \path (1.2,0.25) node (p2) {} ;
        \path (-0.25,1.63333) node (p3) {} ;
                
         \path (-3,-0.66666) node (p11) {} ;
        \path (3,-0.66666) node (p22) {} ;
        \path (0,3.333333) node (p33) {} ;
           
        \draw (p1)  --  (p2);
         \draw (p1)  --  (p3);
        \draw (p3)  --  (p2);
        \draw (p11)  --  (p22);
         \draw (p11)  --  (p33);
        \draw (p33)  --  (p22);
         \draw (p1)  --  (p11);
         \draw (p2)  --  (p22);
        \draw (p3)  --  (p33);
\draw[thin, dashed] (0,-1.3)--(0,4);
                
         \node [rectangle,draw=white, fill=white] (a) at (0,-1.7) {(a)};        
                  \end{tikzpicture}
                  \hspace{0.2cm}
                   \begin{tikzpicture}[very thick,scale=0.5]
\tikzstyle{every node}=[circle, draw=black, fill=white, inner sep=0pt, minimum width=4pt];
        \path (-1,0) node (p1) {} ;
        \path (1,0) node (p2) {} ;
        \path (0,1.33333) node (p3) {} ;
        
           \node [draw=white, fill=white] (a) at (-0.5,1.7) {};
          \node [draw=white, fill=white] (a) at (-1,-1.1) {};  
         \node [draw=white, fill=white] (a) at (1.7,0.2) {};
               
         \path (-3,-0.66666) node (p11) {} ;
        \path (3,-0.66666) node (p22) {} ;
        \path (0,3.333333) node (p33) {} ;

        \draw (p1)  --  (p2);
         \draw (p1)  --  (p3);
        \draw (p3)  --  (p2);
        \draw (p11)  --  (p22);
         \draw (p11)  --  (p33);
        \draw (p33)  --  (p22);
         \draw (p1)  --  (p11);
         \draw (p2)  --  (p22);
        \draw (p3)  --  (p33);
        \draw[thin, dashed] (0,-1.3)--(0,4);
             
        % \draw[dashed, thin](0,-1.4)--(0,3.8); 
        
            \node [rectangle,draw=white, fill=white] (a) at (0,-1.7) {(b)};
      \end{tikzpicture}
      \hspace{1cm}
\begin{tikzpicture}[very thick,scale=0.8]
   \tikzstyle{every node}=[circle, draw=black, fill=white, inner sep=0pt, minimum width=4pt];

   \path (-2,0) node (p1)  {} ;
 \path (-0.5,0) node (p2)  {} ;
       \path (0,-1) node (p3)  {} ;
 \path (0,1) node (p4)  {} ;
 \path (2,0) node (p5)  {} ;
 \path (1.5,0) node (p6)  {} ;
 
\draw (p1)  --  (p2);
  \draw (p1)  --  (p3);
\draw (p1)  --  (p4);
  \draw (p2)  --  (p3);
\draw (p2)  --  (p4);

\draw (p6)  --  (p5);
  \draw (p6)  --  (p3);
\draw (p6)  --  (p4);
  \draw (p5)  --  (p3);
\draw (p5)  --  (p4);

\draw[thin, dashed] (0,-1.3)--(0,1.3);
   
 \node [draw=white, fill=white] (b) at (0,-1.6) {(c)};
 \end{tikzpicture}
 \hspace{0.2cm}
\begin{tikzpicture}[very thick,scale=0.8]
   \tikzstyle{every node}=[circle, draw=black, fill=white, inner sep=0pt, minimum width=4pt];

   \path (-2,0) node (p1)  {} ;
 \path (-1,0) node (p2)  {} ;
       \path (0,-1) node (p3)  {} ;
 \path (0,1) node (p4)  {} ;
 \path (2,0) node (p5)  {} ;
 \path (1,0) node (p6)  {} ;
 
\draw (p1)  --  (p2);
  \draw (p1)  --  (p3);
\draw (p1)  --  (p4);
  \draw (p2)  --  (p3);
\draw (p2)  --  (p4);

\draw (p6)  --  (p5);
  \draw (p6)  --  (p3);
\draw (p6)  --  (p4);
  \draw (p5)  --  (p3);
\draw (p5)  --  (p4);

\draw[thin, dashed] (0,-1.3)--(0,1.3);
   
 \node [draw=white, fill=white] (b) at (0,-1.6) {(d)};
\end{tikzpicture}
                \end{center}
\caption{(a) A framework in the plane with no self-stress whose symmetrically averaged framework under reflection symmetry (b) has a $\mathbb{Z}_2$-extensive self-stress, which is fully-symmetric. The symmetrically averaged framework in (d) has only a single self-stress, which is  $\mathbb{Z}_2$-extensive, just like the original non-symmetric framework in (c). The self-stress of (d) is anti-symmetric with respect to the reflection symmetry (i.e. the reflection reverses the sign of the stress coefficients).}
\label{fig:prism}
\end{figure}
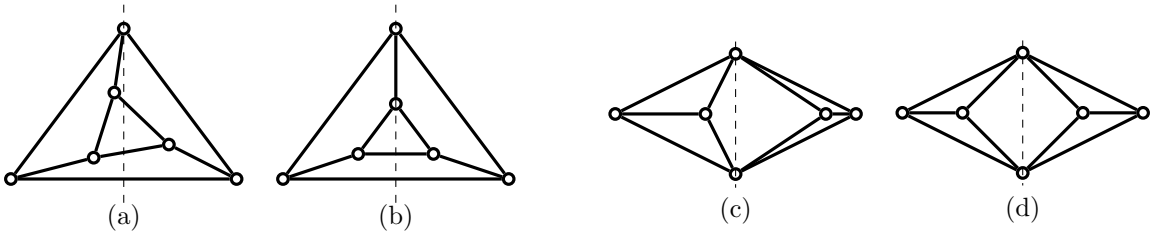

Note that the $\Gamma$-extensive self-stresses in the  averaged frameworks may be ``fully-symmetric", in the sense that the stress coefficients on the edges of the same orbit are the same (as is the case in the well-known example of Figure~\ref{fig:prism}(b), which shows the triangular prism graph in a Desargues configuration), or ``anti-symmetric", in the sense that the stress coefficients on the two edges of the same orbit are related by a  sign change (as is the case in Figure~\ref{fig:prism}(d)).
Formally,  fully- and anti-symmetric self-stresses lie in  the  subspaces of $\mathbb{R}^{|E|}$ which are the invariant subspaces of  the edge permutation representation of the graph under $\mathbb{Z}_2$ corresponding to the trivial and non-trivial irreducible representation of $\mathbb{Z}_2$, respectively. See for example \cite{kangwai1999b,kangwai2000,fow00,sch10a} for details. 

Both fully-symmetric and anti-symmetric self-stresses (and  generalisations of these symmetry types for higher order groups) are useful in engineering applications such as gridshell design \cite{smmb,mmsb,schmil}.

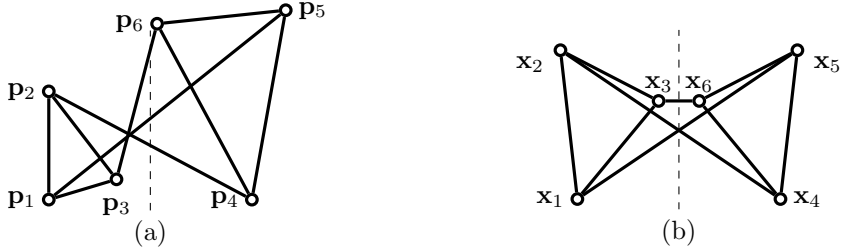
\begin{figure}[htp]
\begin{center}
\begin{tikzpicture}[very thick,scale=0.9]
   \tikzstyle{every node}=[circle, draw=black, fill=white, inner sep=0pt, minimum width=4pt];

   \path (-1.5,-0.8) node (p3)  {} ;
 \path (-1.5,0.8) node (p1)  {} ;
    \path (1.5,-0.8) node (p4)  {} ;
   \path (2,2) node (p2)  {} ;
\path (-0.5,-0.5) node (l)  {} ;
   \path (0.1,1.8) node (r)  {} ;

\node [rectangle,draw=white, fill=white] (a) at (-1.9,-0.8) {$\p_1$}; 
\node [rectangle,draw=white, fill=white] (a) at (-1.9,0.8) {$\p_2$};
\node [rectangle,draw=white, fill=white] (a) at (-0.5,-0.9) {$\p_3$};
\node [rectangle,draw=white, fill=white] (a) at (-0.3,1.8) {$\p_6$};
\node [rectangle,draw=white, fill=white] (a) at (1.1,-0.8) {$\p_4$};
\node [rectangle,draw=white, fill=white] (a) at (2.4,2) {$\p_5$};

\draw (p1)  --  (p4);
 \draw (p2)  --  (p3);
 \draw (l)  --  (r);

        \draw (p1)  --  (p3);
\draw (p4)  --  (p2);
       \draw (l)  --  (p1);
        \draw (l)  --  (p3);
\draw (r)  --  (p2);
        \draw (r)  --  (p4);

\draw[thin, dashed] (0,-1.2)--(0,1.7);
   
 \node [draw=white, fill=white] (b) at (0,-1.3) {(a)};
\end{tikzpicture}
\hspace{1cm}
\begin{tikzpicture}[very thick,scale=1]
   \tikzstyle{every node}=[circle, draw=black, fill=white, inner sep=0pt, minimum width=4pt];

%\draw[thin, dashed] (0,-1.2)--(0,1.2);
   
% \node [draw=white, fill=white] (b) at (0,-1.3) {(b)};
\end{tikzpicture}
\hspace{1cm}
\begin{tikzpicture}[very thick,scale=0.9]
   \tikzstyle{every node}=[circle, draw=black, fill=white, inner sep=0pt, minimum width=4pt];

   \path (-1.5,-0.8) node (p3)  {} ;
 \path (-1.75,1.4) node (p1)  {} ;
    \path (1.5,-0.8) node (p4)  {} ;
   \path (1.75,1.4) node (p2)  {} ;
\path (-0.3,0.65) node (l)  {} ;
   \path (0.3,0.65) node (r)  {} ;

\node [rectangle,draw=white, fill=white] (a) at (-1.9,-0.8) {$\x_1$}; 
\node [rectangle,draw=white, fill=white] (a) at (-2.2,1.2) {$\x_2$};
\node [rectangle,draw=white, fill=white] (a) at (-0.3,0.9) {$\x_3$};
\node [rectangle,draw=white, fill=white] (a) at (0.3,0.9) {$\x_6$};
\node [rectangle,draw=white, fill=white] (a) at (1.9,-0.8) {$\x_4$};
\node [rectangle,draw=white, fill=white] (a) at (2.2,1.2) {$\x_5$};

\draw (p1)  --  (p4);
 \draw (p2)  --  (p3);
 \draw (l)  --  (r);

        \draw (p1)  --  (p3);
\draw (p4)  --  (p2);
       \draw (l)  --  (p1);
        \draw (l)  --  (p3);
\draw (r)  --  (p2);
        \draw (r)  --  (p4);

\draw[thin, dashed] (0,-1.2)--(0,1.7);
   
 \node [draw=white, fill=white] (b) at (0,-1.3) {(b)};
\end{tikzpicture}
\hspace{1cm}
\begin{tikzpicture}[very thick,scale=1]
   \tikzstyle{every node}=[circle, draw=black, fill=white, inner sep=0pt, minimum width=4pt];

%\draw[thin, dashed] (0,-1.2)--(0,1.2);
   
% \node [draw=white, fill=white] (b) at (0,-1.3) {(b)};
\end{tikzpicture}
\end{center}
\vspace{-0.6cm} \caption{A realisation $(G,\p)$ of the triangular prism graph in the plane with a $\mathbb{Z}_2$-extensive self-stress, where $\mathbb{Z}_2$ is generated by the automorphism $(1,4)(2,5)(3,6)$ of $G$ (a). After symmetric averaging with the group $\tau(\mathbb{Z}_2)=\mathcal{C}_s$, the three bars connecting the triangles no longer meet in a point, and hence the averaged framework $(G,\x)$ in (b) is independent.} \label{fig:extloss}
\end{figure}

Figure~\ref{fig:extloss} shows that $\Gamma$-extensive self-stresses may also be lost under symmetric averaging. Note that the configuration in Figure~\ref{fig:extloss}(a) is not close to being mirror-symmetric (in particular, the $y$-coordinates of $\p_3$ and $\p_6$ are far away from each other). However, the self-stress may also be lost for configurations that are close to being mirror-symmetric, as we see as follows.

Choose $\p_1,\p_2,\p_4,\p_5$ so that $\p_4$ and $\p_5$ are the respective mirror-symmetric copies of $\p_1$ and $\p_2$, and the edges $\p_1\p_5$ and $\p_2\p_4$ meet on the mirror line at the origin. Further, for $r>0$ choose an arbitrarily small $\epsilon>0$ so that with $\p_3=(-\epsilon,-r)$ and $\p_6=(2\epsilon, 2r)$, the edge $\p_3\p_6$ also goes through the origin. So the framework $(G,\p)$ does not quite have reflection symmetry, but has a ($\mathbb{Z}_2$-extensive) self-stress. Since  $\frac{\epsilon+2\epsilon}{2}=\frac{3\epsilon}{2}$ and $\frac{-r+2r}{2}=\frac{r}{2}$, in the symmetrically averaged framework $(G,\x)$, the points $\x_3$ and $\x_6$ will have coordinates $(-\frac{3\epsilon}{2},\frac{r}{2})$ and $(\frac{3\epsilon}{2},\frac{r}{2})$, so that the edge $\p_3\p_6$ no longer goes through the origin and the self-stress is lost. We may now choose $r$ arbitrarily close to zero so that the original framework comes arbitarily close to being mirror-symmetric.

We note that if we had averaged the framework in Figure~\ref{fig:extloss}(a) with the group $\tau(\mathbb{Z}_2)=\mathcal{C}_s$, where $\mathbb{Z}_2$ is generated by the automorphism $(1,5)(2,4)(3,6)$ of $G$, then the averaged framework would have three bars that are perpendicular to the mirror line, resulting in a Desargues configuration (as in Figure~\ref{fig:desarguestypes}(a)) that still has a $\mathbb{Z}_2$-extensive self-stress. 

So the choice of the group $\Gamma$  is crucial when applying the symmetric averaging procedure. Further, for a fixed $\Gamma$, different choices of the faithful representation $\tau$ of $\Gamma$ can also result in frameworks with different self-stress properties (although there exist group pairings, such as $\mathcal{C}_s$ and $\mathcal{C}_2$ in the plane,  where under suitable conditions these properties are equivalent for a given fixed $\Gamma<\textrm{Aut}(G)$, as shown in \cite{cnsw}).

In Section~\ref{sec:alg} we will present an algorithm which, for a given graph $G$, yields the subgroup(s) $\Gamma$ of $\Aut(G)$ and the corresponding  representations $\tau$ of $\Gamma$  for which the symmetry-extended Maxwell counting rule \cite{fow00} detects the maximum number of self-stresses in non-degenerate $(\Gamma,\tau)$-generic  realisations of $G$. These pairs $(\Gamma,\tau)$ provided by this algorithm are natural choices for the application of the  symmetric averaging procedure when trying to maximise states of self-stress.

\medskip

\noindent \textbf{$\Gamma$-localised versus $\Gamma$-extensive self-stresses.} We now show that $\Gamma$-localised self-stresses may become $\Gamma$-extensive under symmetric averaging and vice versa.

First, Figure~\ref{fig:locext1} shows that the averaging map may turn a $\Gamma$-localised self-stress into a $\Gamma$-extensive self-stress.  

\begin{figure}[htp]
\begin{center}
\begin{tikzpicture}[very thick,scale=1]
   \tikzstyle{every node}=[circle, draw=black, fill=white, inner sep=0pt, minimum width=4pt];

   \path (-1,-0.5) node (p3)  {} ;
 \path (-1,0.5) node (p1)  {} ;
    \path (1,-0.5) node (p4)  {} ;
   \path (1,0.5) node (p2)  {} ;
\path (-2,0) node (l)  {} ;
   \path (1.1,0) node (r)  {} ;
\draw[thin, dashed] (0,-1)--(0,1);

\draw (p1)  --  (p2);
        \draw (p1)  --  (p3);
\draw (p4)  --  (p2);
        \draw (p4)  --  (p3);
\draw (l)  --  (p1);
        \draw (l)  --  (p3);
\draw (r)  --  (p2);
        \draw (r)  --  (p4);
 \draw (r)  --  (l);
   
 \node [draw=white, fill=white] (b) at (0,-1.3) {(a)};
\end{tikzpicture}
\hspace{1cm}
\begin{tikzpicture}[very thick,scale=1]
   \tikzstyle{every node}=[circle, draw=black, fill=white, inner sep=0pt, minimum width=4pt];

   \path (-1,-0.5) node (p3)  {} ;
 \path (-1,0.5) node (p1)  {} ;
    \path (1,-0.5) node (p4)  {} ;
   \path (1,0.5) node (p2)  {} ;
  \path (-1.5,0) node (l)  {} ;
   \path (1.5,0) node (r)  {} ;

\draw (p1)  --  (p2);
        \draw (p1)  --  (p3);
\draw (p4)  --  (p2);
        \draw (p4)  --  (p3);
\draw (l)  --  (p1);
        \draw (l)  --  (p3);
\draw (r)  --  (p2);
        \draw (r)  --  (p4);
 \draw (r)  --  (l);
\draw[thin, dashed] (0,-1)--(0,1);
   
 \node [draw=white, fill=white] (b) at (0,-1.3) {(b)};
\end{tikzpicture}
\end{center}
\vspace{-0.6cm} \caption{The $\mathbb{Z}_2$-localised self-stress of the framework in (a), which is supported on the collapsed triangle, is turned into a $\mathbb{Z}_2$-extensive self-stress (which is non-zero on every edge) under symmetric averaging with the reflection group (b).} \label{fig:locext1}
\end{figure}
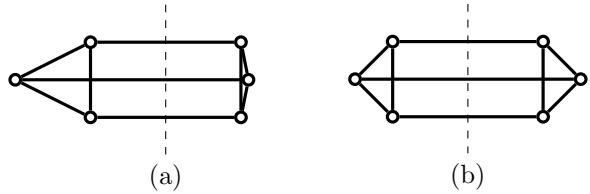

Other less degenerate versions of this example for reflection symmetry can easily be constructed. Consider, for example,  two vertex-disjoint triangular prism graphs, one on each side of the mirror, that are connected by three vertex-disjoint nearly parallel edges, and one of which is realised with a strongly $\Gamma$-localised self-stress. Then symmetric averaging with the reflection symmetry can destroy the $\Gamma$-localised self-stress and create a $\Gamma$-extensive self-stress which is non-zero on the three connecting edges, as they have become parallel.

Symmetric averaging may also turn $\Gamma$-extensive self-stresses into $\Gamma$-localised self-stresses, as shown in Figure~\ref{fig:locext2}. Again, less degenerate examples are easily constructed.

\begin{figure}[htp]
\begin{center}
\begin{tikzpicture}[very thick,scale=1]
   \tikzstyle{every node}=[circle, draw=black, fill=white, inner sep=0pt, minimum width=4pt];

   \path (-1,-0.5) node (p3)  {} ;
 \path (-1,0.5) node (p1)  {} ;
    \path (1,-0.5) node (p4)  {} ;
   \path (1,0.5) node (p2)  {} ;
\path (-1.5,0) node (l)  {} ;
   \path (0.5,0) node (r)  {} ;

\draw (p1)  --  (p2);
        \draw (p1)  --  (p3);
\draw (p4)  --  (p2);
        \draw (p4)  --  (p3);
\draw (l)  --  (p1);
        \draw (l)  --  (p3);
\draw (r)  --  (p2);
        \draw (r)  --  (p4);
 \draw (r)  --  (l);
\draw[thin, dashed] (0,-1)--(0,1);
   
 \node [draw=white, fill=white] (b) at (0,-1.3) {(a)};
\end{tikzpicture}
\hspace{1cm}
\begin{tikzpicture}[very thick,scale=1]
   \tikzstyle{every node}=[circle, draw=black, fill=white, inner sep=0pt, minimum width=4pt];

   \path (-1,-0.5) node (p3)  {} ;
 \path (-1,0.5) node (p1)  {} ;
    \path (1,-0.5) node (p4)  {} ;
   \path (1,0.5) node (p2)  {} ;
  \path (-1.1,0) node (l)  {} ;
   \path (1.1,0) node (r)  {} ;

\draw (p1)  --  (p2);
        \draw (p1)  --  (p3);
\draw (p4)  --  (p2);
        \draw (p4)  --  (p3);
\draw (l)  --  (p1);
        \draw (l)  --  (p3);
\draw (r)  --  (p2);
        \draw (r)  --  (p4);
         \draw (r)  --  (l);
\draw[thin, dashed] (0,-1)--(0,1);
   
 \node [draw=white, fill=white] (b) at (0,-1.3) {(b)};
\end{tikzpicture}
\end{center}
\vspace{-0.6cm} \caption{The framework in (a) has a $\mathbb{Z}_2$-extensive self-stress which is non-zero on every edge. Under symmetric averaging with the reflection group, this self-stress disappears and is replaced with two strongly $\mathbb{Z}_2$-localised self-stresses, one for each of the collapsed triangles.} \label{fig:locext2}
\end{figure}
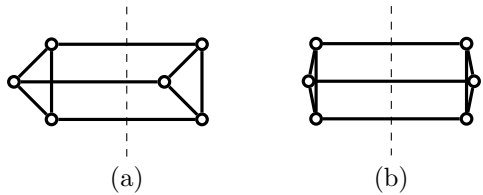

This also illustrates that  degenerating  a configuration further and further in the attempt to create additional self-stresses may be detrimental. In a framework on a triangular prism with a $\Gamma$-extensive self-stress that is non-zero on every edge, for example, collapsing a triangle is a bad idea, as it destroys the $\Gamma$-extensive property of the self-stress, as seen in the examples above. This shows the subtlety of the problem of creating $\Gamma$-extensive self-stresses.

\subsection{General localised and extensive self-stresses}\label{subsec:ssext}

In gridshell applications, members with zero coefficients in a self-stress are usually less interesting, as they  produce a ``flat" dihedral angle of $\pi$ for the two faces incident to the edge in the corresponding polyhedral surface obtained from the Maxwell-Cremona lifting.  Hence, we are often interested in self-stresses that are ``generally extensive'' in the 
sense that all the coefficients are non-zero; i.e., they have full support.
The notion of a $\Gamma$-extensive self-stress, while useful for many examples of symmetric frameworks, 
is more restrictive,  because for certain classes of frameworks, the support of a 
$\Gamma$-extensive self-stress can still be rather small, even if $\Gamma$ is large.

\begin{figure}[htp]
\begin{center}
             \begin{tikzpicture}[very thick,scale=1]
\tikzstyle{every node}=[circle, draw=black, fill=white, inner sep=0pt, minimum width=4pt];
         \path (90:1.7cm) node (o1) {} ;
       \path (210:1.7cm) node (o2) {} ;
           \path (330:1.7cm) node (o3) {} ;
        
        \path (90:1cm) node (p1) {} ;
       \path (210:1cm) node (p2) {} ;
           \path (330:1cm) node (p3) {} ;

 \path (90:0.5cm) node (p11) {} ;
       \path (210:0.5cm) node (p22) {} ;
           \path (330:0.5cm) node (p33) {} ;

            \draw (o1)  --  (o2);
         \draw (o1)  --  (o3);
        \draw (o3)  --  (o2);
        \draw (p1)  --  (p2);
         \draw (p1)  --  (p3);
        \draw (p3)  --  (p2);
        \draw (p11)  --  (p22);
         \draw (p11)  --  (p33);
        \draw (p33)  --  (p22);
         \draw (p1)  --  (p11);
         \draw (p2)  --  (p22);
        \draw (p3)  --  (p33);
         \draw (o1)  --  (p1);
         \draw (o2)  --  (p2);
        \draw (o3)  --  (p3);

\draw[thin, dashed] (0,-1)--(0,1);
\draw[thin, dashed] (210:1cm)--(30:1cm);
     \draw[thin, dashed] (150:1cm)--(330:1cm);               
                  \end{tikzpicture}
       
\end{center}
\vspace{-0.6cm} \caption{A framework consisting of nested triangular prisms.} \label{fig:nestedprism}
\end{figure}
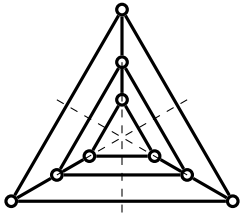

Consider, for example the framework with $\mathcal{C}_{3v}$ symmetry shown in Figure~\ref{fig:nestedprism}. This framework has a $2$-dimensional space of $\mathcal{C}_{3v}$-extensive self-stresses, one for the ``inner" triangular prism and one for the ``outer" one (each one being a Desargues configuration). Each of these self-stresses is only supported on 9 of the 15 edges. Such a ``nested'' framework is not suitable as a form diagram of a gridshell, as in any vertical Maxwell-Cremona lifting of  the form diagram corresponding to such a self-stress, the part of the framework that is unsupported by the self-stress would stay in the original plane, which is not acceptable from an architectural point of view. 

In the following, we will introduce a definition of  an extensive self-stress that does not depend on any symmetry of the structure, and is often better suited for engineering applications.
In the non-symmetric setting, there is no natural analogue of a strongly or weakly localised self-stress. However, a naive way to define extensive would be to simply say that the self-stress has full support. 
The reason this definition is also not up to the task is that, by taking 
linear combinations, we can change the supports of self-stresses.  Here are two extreme examples:
\begin{itemize}
    \item For a framework $(K_n,\p)$, where $K_n$ is the complete graph on $n$ vertices, every self-stress is in the span of self-stresses supported on $K_{d+2}$
        subgraphs.  No matter what the support of a self-stress is, it can be decomposed into local parts.
    \item For a framework $(C_n,\p)$, where $C_n$ is the cycle graph on $n$ vertices, and all the points are collinear, there is no self-stress supported on a proper subgraph, unless there is a 
        zero-length edge, so this self-stress has to be extensive in any reasonable definition.
\end{itemize}

A less trivial example is the nested prism framework in Figure~\ref{fig:nestedprism}. It has a self-stress with full support, but this self-stress is a linear combination  of the self-stresses that are only supported on the inner and outer prisms. Such a self-stress should not be considered ``extensive".

Instead we propose the following definition.

\begin{definition} (Extensive self-stress)
    Given a  framework  $(G,\p)$, a self-stress is called \emph{extensive} if it has full support and it is not the linear combination of self-stresses with strictly smaller support.
\end{definition}

A key advantage of this definition of extensiveness is that it is linear-algebraic in flavour, whereas ``full support" is not.  As the examples illustrate, 
one can use linear combinations to change supports without changing the underlying self-stress space. By formulating extensiveness in terms of  linear spans, 
we get what seems to be a more useful definition.

The example of the cycle graph realised as a framework on a line with no coincident vertices shows that extensive self-stresses exist.  Here is a more general 
construction.  Let $G$ be a graph so that, for any edge $ij$, $G-ij$ can be peeled to an empty graph by 
repeatedly removing degree $\le d$ vertices and their incident edges.  If $(G,\p)$ is a general position framework in 
$\RR^d$, any self-stress is extensive, because such a framework cannot support any self-stress that isn't non-zero on every edge.
To see this, suppose that some edge $ij$ is unstressed; the  graph resulting from removing $ij$ has a vertex of degree $\le d$, which cannot
be in equilibrium unless all its incident edges are unstressed.  Removing these exposes a new low-degree vertex, and, 
by induction, every edge is unstressed.  The interesting theoretical question is when there are extensive self-stresses in frameworks 
that do not arise this way.

Note that the definition of extensive is still somewhat restrictive. Returning to the complete graph example, since every self-stress 
in any framework $(K_n,\p)$ can be obtained from a linear combination of self-stresses supported on a $K_{d+2}$ subgraph 
or less, no self-stress in such a framework is extensive, unless $n\le d+2$. In fact, we have the following result.

\begin{prop}\label{prop:ext}
    If a framework $(G,\p)$ has an extensive self-stress, then this must be its only %extensive
    self-stress (up to scale). 
\end{prop}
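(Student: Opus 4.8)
The plan is to argue by contradiction: assume $(G,\p)$ has an extensive self-stress $\omega$ but that its space of self-stresses $S(\p)$ has dimension at least two, and then manufacture a decomposition of $\omega$ into self-stresses of strictly smaller support, contradicting extensiveness. The whole argument lives inside the two-dimensional pencil spanned by $\omega$ and a second, linearly independent self-stress, so it is really a short piece of linear algebra that leans on the hypothesis that $\omega$ has full support.

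Concretely, I would first pick any self-stress $\omega'$ that is not a scalar multiple of $\omega$. Since $\omega$ has full support, every coordinate $\omega_e$ is nonzero, so for a chosen edge $e_1$ I can set $\tilde\omega' = \omega' - (\omega'_{e_1}/\omega_{e_1})\,\omega$; this is again a self-stress, it is still independent of $\omega$ (hence nonzero), and it vanishes on $e_1$, so $\supp(\tilde\omega') \subsetneq \supp(\omega) = E$. Next, since $\tilde\omega' \neq 0$ there is an edge $e_0$ with $\tilde\omega'_{e_0}\neq 0$; setting $t = \omega_{e_0}/\tilde\omega'_{e_0}$ (well defined and nonzero because $\omega_{e_0}\neq 0$) and $\omega'' = \omega - t\,\tilde\omega'$ produces a self-stress vanishing on $e_0$, so $\supp(\omega'')\subsetneq E$. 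Then the identity $\omega = \omega'' + t\,\tilde\omega'$ exhibits $\omega$ as a linear combination of the two self-stresses $\omega''$ and $\tilde\omega'$, each of support strictly smaller than that of $\omega$, contradicting the definition of extensiveness. Hence $\dim S(\p)=1$, i.e.\ $\omega$ is the unique self-stress up to scale.

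The step that requires genuine care, and that I expect to be the only real subtlety, is the preprocessing in which $\omega'$ is replaced by $\tilde\omega'$. One cannot simply use an arbitrary independent $\omega'$ as the second summand, because $\omega'$ might itself have full support, in which case it would not qualify as a self-stress of \emph{strictly smaller} support. Forcing a zero coordinate by subtracting the right multiple of $\omega$ is exactly what repairs this, and it is here that full support of $\omega$ is used twice over: once to guarantee that the denominators $\omega_{e_1}$ and $\omega_{e_0}$ are nonzero, and once to guarantee that any self-stress vanishing on even a single edge automatically has support strictly contained in $E$. Everything else is routine, using only that a linear combination of self-stresses is again a self-stress.
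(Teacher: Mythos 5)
Your proof is correct and follows essentially the same route as the paper's: both arguments work inside the two-dimensional pencil spanned by $\omega$ and a second independent self-stress, and both use the full support of $\omega$ to subtract off suitable multiples that force zero coordinates, thereby exhibiting $\omega$ as a linear combination of self-stresses of strictly smaller support. The only difference is cosmetic --- by first replacing $\omega'$ with $\tilde\omega'$ vanishing on $e_1$, you avoid the paper's case split on whether the second self-stress has full support, which slightly streamlines the argument.
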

\begin{proof} Suppose $(G,\p)$ has an extensive self-stress $\mathbf\omega_1$ and another independent self-stress $\mathbf\omega_2$. By scaling we may assume that the stress coefficient on an edge $e$ of $G$ is equal to $1$ for both $\mathbf\omega_1$ and $\mathbf\omega_2$. If $\mathbf\omega_2$ does not have full support, then $\omega_3=\omega_2-\omega_1$ is a non-zero self-stress which is $0$ on $e$, and since $\omega_1=\omega_2-\omega_3$, we have a contradiction to the extensiveness of $\mathbf\omega_1$. If $\omega_2$ does have full support, then, by independence of $\mathbf\omega_1$ and $\mathbf\omega_2$, 
there must exist another edge $f\neq e$ for which $\mathbf\omega_1$ and $\mathbf\omega_2$ have distinct non-zero stress coefficients, say $(\mathbf\omega_2)_f=\lambda(\mathbf\omega_1)_f$ for $\lambda\neq 0,1$. Now, the self-stress $\mathbf\omega_3=\mathbf\omega_2- \mathbf\omega_1$   is zero on $e$ and non-zero on $f$, and the self-stress $\mathbf \omega_4= \mathbf\omega_2 - \lambda \mathbf\omega_1$  is zero on $f$ and non-zero on $e$. But $ \mathbf \omega_3 -\mathbf \omega_4= (\lambda-1) \mathbf\omega_1$ and so again we have a contradiction to the extensiveness of $\mathbf\omega_1$. 
\end{proof}

 Thus, we are mainly interested in sparse graphs, i.e., graphs $G$ satisfying the $d$-dimensional Maxwell sparsity count $m' \leq dn' - \frac{d(d+1)}{2}$ for all non-trivial subgraphs of $G$ with $n'$ vertices and $m'$ edges.

We note that while a framework with an extensive self-stress cannot have further self-stresses, by Proposition~\ref{prop:ext},  a planar $2$-dimensional framework with an extensive self-stress makes an ideal candidate for an initial form diagram to design a gridshell roof.
Having an extensive self-stress ensures a well-balanced initial structure, and additional self-stresses can be introduced by strategically adding edges to resolve the desired loads of the gridshell while maintaining structural efficiency. Moreover, identifying graphs that admit extensive self-stress realisations is valuable, as they help designers avoid localised self-stresses caused by over-counted or nested subgraph structures like the one in Figure~\ref{fig:nestedprism}.  Starting from an extensive self-stress, the framework's configuration can also be perturbed to try to generate a larger number of self-stresses -- particularly by exploiting symmetry, as discussed in this article -- although this process inherently disrupts the original extensive  self-stress.  This transition is where the notion of $\Gamma$-extensive self-stresses becomes relevant, as they tend to allow for a distributed and structurally beneficial self-stress space, whereas $\Gamma$-localised self-stresses typically remain undesirable.

\begin{remark}
    Extensive self-stresses are also relevant in the analysis of mechanical linkages that are pinned down to eliminate trivial motions. As shown in \cite{Assur}, a minimal pinned isostatic graph $G$ in the plane, also known as an \emph{Assur graph} \cite{Assur1}, can be characterised based on special realisations $(G,\p)$ that have a unique (up to scalar) self-stress, which is non-zero on all edges (and is hence extensive), as well as a unique (up to scalar) infinitesimal flex, which is is non-zero on all un-pinned vertices.
\end{remark}

One problem with the definition of extensive self-stresses is that finding them in sparse graphs is difficult. (In contrast, if $G$ is a rigidity circuit, i.e. a minimally dependent graph, then any generic realisation of it will have an extensive self-stress.)  
We suggest trying to find extensive self-stresses using a theory introduced by White and Whiteley in \cite{wwstresses} for isostatic graphs. 

\begin{definition}\label{def:WWpure}
Let $G$ be a $d$-isostatic graph.  Let $\x$ be a $d$-dimensional ``configuration'' of variables, and $e_{ij} = \x_j - \x_i$ the 
``edge vector'' of each edge.  
White and Whiteley describe a ``bracket polynomial'' $C_G(\x)$ in the edge vectors $e_{ij}$, 
called the \emph{pure condition} of $G$.  A bracket is a determinant of a 
$d\times d$ matrix which has, as its rows, a $d$-tuple of edge vectors.
The defining property of $C_G$ is that a framework $(G,\p)$ supports 
a self-stress if and only if $C_G(\p) = 0$.
\end{definition} 
Let us quickly describe White and Whiteley's work in context.  For any graph 
$G$, the set $S$ of self-stressed frameworks $(G,\p)$,
for any graph $G$, must be invariant under affine transformations of the
configuration space.  The first and second theorems of invariant theory (see, e.g., 
\cite{S08}) imply that the vanishing ideal of $S$ is generated by bracket polynomials.
When $G$ is, furthermore, isostatic, this ideal has a unique (up to scale) generator, 
which must be $C_G$, for which White and Whitely give a combinatorial formula.
The White and Whiteley formula is a ``$d$-fan sum'' over a ``tie down'' (the 
choice of tie-down is shown to change the fan sum by a factor that can be removed).

White and Whiteley's aim in their work was to generalise classical ``synthetic'' 
approaches to spotting and designing frameworks with self-stresses.  Here synthetic 
refers to statements that can be formulated in terms of coincidences between projective 
flats and points, their spans, and their intersections.  In the White and Whiteley 
setup, these statements correspond to $C_G$ having what is known as a 
factorisation in the Grassmann--Cayley algebra (for short a ``Cayley factorisation'').
Whether every pure condition admits such a factorisation is open, though the 
answer is strongly believed to be negative.  Sturmfels and Whiteley \cite{SW91}, 
describe a bracket polynomial that does not admit a Cayley factorization.  White 
and Whiteley \cite{wwstresses} show that the pure condition of the complete 
bipartite graph $K_{4,6}$, which is isostatic in dimension three, has an 
irreducible factor $Q$ corresponding to the statement that the ten vertices of 
$(G,\p)$ lie on a quadric surface.  This statement is widely believed not 
to admit a Cayley factorisation; see \cite{T24} for recent progress on this 
question.

Going further, White and Whiteley explored factoring $C_G$ in the bracket and polynomial rings.  
The hypersurface $V(C_G)$, consisting of configurations on which $C_G$ vanishes, is, in general, 
reducible, so we have that $C_G = f^{r_1}_1f^{r_2}_2\cdots f^{r_k}_k$ for some 
uniquely determined irreducible factors $f_i$, corresponding to the geometric irreducible 
components $V(f_i)$ of $V(C_G)$.  They show that these $f_i$ also have a representation as bracket 
polynomials, and, equivalently, that the $V(f_i)$  are affinely invariant.

Because these $V(f_i)$ are irreducible, they have generic behaviours. 
If $X$ is an irreducible algebraic set defined by rational polynomials, 
then $\x\in X$ is called \emph{generic} in $X$, if, 
whenever $P$ is a polynomial with rational coefficients and $P(\x) = 0$, then $P$ 
vanishes on $X$.  This is a way of saying that $\x$ is algebraically typical for $X$.
We need this definition because, unless $X$ is an affine space, 
$\x\in X$ means that $\x$ satisfies some 
rational polynomials, and being typical doesn't mean ``no polynomial'', 
just ``no extra polynomials''.
\begin{theorem}
Let $G$ be a $d$-isostatic graph.  The following quantities are the same for every $\p\in V(f_i)$ that is generic in 
$V(f_i)$ (which makes it not generic in the configuration space):
\begin{itemize}
    \item The dimension of the space of self-stresses of $(G,\p)$.
    \item The support of a  self-stress of $(G,\p)$.
\end{itemize}
\end{theorem}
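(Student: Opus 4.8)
The plan is to reduce both statements to a single observation: the linear matroid represented by the rows of the rigidity matrix $R(\p)$ is constant as $\p$ ranges over the points of $V(f_i)$ that are generic in $V(f_i)$. Recall that a self-stress is precisely a linear dependence among the rows $r_e$ ($e\in E$) of $R(\p)$, so the self-stress space has dimension $m-\mathrm{rank}\,R(\p)$, and an edge $e$ lies in the support of some self-stress exactly when $r_e$ is linearly dependent on $\{r_f : f\neq e\}$, i.e. when $e$ is \emph{not} a coloop of the row matroid. Reading ``the support of a self-stress'' as the support of a generic self-stress --- equivalently, the union of the supports of all self-stresses --- this is exactly the set of non-coloops. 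Thus both quantities in the statement are invariants of the row matroid of $R(\p)$, and it suffices to show this matroid does not change on the generic locus of $V(f_i)$.

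Next I would record the arithmetic input. Every entry of $R(\p)$ is an integer-coefficient linear form in the coordinates of $\p$ (each row involves the differences $\p_i-\p_j$), so every minor of $R(\p)$ is a polynomial with rational coefficients in $\p$. The definition of genericity in $V(f_i)$ then applies verbatim to these minors: if $\p$ is generic in $V(f_i)$ and a minor $M$ satisfies $M(\p)=0$, then $M$ vanishes identically on $V(f_i)$. Consequently, for such $\p$, a minor is nonzero at $\p$ if and only if it is not identically zero on $V(f_i)$, a criterion that does not depend on the choice of generic $\p$.

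From here the conclusions follow directly. For the rank, let $r_i$ be the maximal value of $\mathrm{rank}\,R(\p)$ over $V(f_i)$, witnessed by some $r_i\times r_i$ minor $M$ with $M\not\equiv 0$ on $V(f_i)$. For $\p$ generic in $V(f_i)$ we get $M(\p)\neq 0$, so $\mathrm{rank}\,R(\p)\geq r_i$, while the reverse inequality holds everywhere on $V(f_i)$; hence $\mathrm{rank}\,R(\p)=r_i$ and the self-stress dimension is the constant $m-r_i$. For the support, whether a subset $F\subseteq E$ of rows is independent is decided by whether the associated $|F|\times|F|$ minors are identically zero on $V(f_i)$, and by the previous paragraph this is the same condition for every generic $\p$. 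Thus the entire row matroid, and in particular its set of coloops, is constant on the generic locus, so the support of a generic self-stress agrees for all $\p$ that are generic in $V(f_i)$.

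The main obstacle is conceptual rather than computational: one must fix the correct reading of ``the support of a self-stress'' when $\mathrm{rank}\,R(\p)$ drops by more than one on $V(f_i)$, so that the self-stress space is higher-dimensional and no individual self-stress is canonical. The matroid formulation resolves this by replacing ``support of the self-stress'' with ``non-coloops of the row matroid,'' which is exactly the union of supports and equals the support of a generic element of the self-stress space. A secondary point to treat with care is that genericity here is relative to the component $V(f_i)$ rather than the ambient configuration space: distinct components of $V(C_G)$ may realise different ranks and supports, and it is precisely the irreducibility of $V(f_i)$ that forces any nonempty relatively-open minor-nonvanishing locus to be dense, so that points generic in the strong sense of the definition do attain the generic matroid of the component.
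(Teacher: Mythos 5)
Your argument is correct. Note, however, that the paper does not actually supply a proof of this theorem: it is asserted as a consequence of the one-line remark that ``because these $V(f_i)$ are irreducible, they have generic behaviours,'' with the surrounding discussion attributing the underlying theory to White and Whiteley. What you have written is the natural formalisation of that remark, and it works. The key observation --- that every minor of $R(\p)$ is a polynomial with rational coefficients in $\p$, so the paper's definition of genericity in $V(f_i)$ forces the set of vanishing minors, hence the entire row matroid of $R(\p)$, to be the same at every generic point of $V(f_i)$ --- immediately gives constancy of $\operatorname{rank} R(\p)$ and therefore of the self-stress dimension $m-\operatorname{rank} R(\p)$. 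Your matroid-theoretic reading of the second bullet is also the right one: the statement ``the support of a self-stress'' is ambiguous when the self-stress space has dimension greater than one, and identifying it with the set of non-coloops (equivalently, the support of a generic element of the left kernel, or the union of all supports) is the invariant that is genuinely constant; in the case the paper cares most about ($r_i=1$, unique self-stress up to scale) this reduces to the literal reading. The only cosmetic point is that your closing appeal to density of the minor-nonvanishing locus is not needed once genericity is taken in the paper's strong algebraic sense --- the contrapositive of the definition already gives $M\not\equiv 0$ on $V(f_i)$ implies $M(\p)\neq 0$ --- though it is a fair remark on why that definition is reasonable.
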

One might guess that the self-stress dimension of any $V(f_i)$ is always one, but White and Whiteley
observed that in $3$-space, the pure condition of the complete bipartite graph $K_{4,6}$ has a factor 
corresponding to the smaller part of the bipartition becoming coplanar.  Such a framework has two 
linearly independent self-stresses \cite{BR,Wbipartite}.  Whether there is a $2$-dimensional  example like this is 
open.  White and Whiteley also show that if $r_i=1$, then $(G,\p)$
with $\p \in V(f_i)$ generic 
has exactly one self-stress  up to scale.  
(It is, however, unclear whether it is  extensive or not.) Closing the loop, in the $K_{4,6}$ example, 
the factor in question appears as a square.

\begin{remark}
    It is a conjecture of White and Whiteley \cite{wwstresses} that in dimension 2 the pure condition has a single factor $f$ (possibly raised to some power) if and only if the graph has no proper rigid subgraph on at least 3 vertices. Moreover, White and Whiteley conjecture that if there is a factor raised to some power, then that factor must have more than one self-stress.    
\end{remark}

There are a few directions to go from here:
\begin{itemize}
    \item By factoring the pure condition, we can consider, factor by factor, a typical (generic) 
        self-stressed framework $(G,\p)$ with $\p\in V(f_i)$.  If any such $(G,\p)$ has only one self-stress (up to scale) and it has full support, then, by definition, 
        every typical framework $(G,\q)$ with $\q\in V(f_i)$ will have the same 
        property.  Moreover, this self-stress will be extensive, so $V(f_i)$ yields a family 
        of extensively self-stressed frameworks.
    \item By factoring the pure condition, we can check whether each $V(f_i)$ is invariant under the 
        averaging map.  If not, typical self-stresses associated with $V(f_i)$ disappear after averaging.
        This gives us a new obstruction to using symmetrisation to increase the number of self-stresses.
        If $V(f_i)$ is invariant under the 
        averaging map, then symmetrisation starting from a framework associated with $V(f_i)$ may well 
        increase the number of independent self-stresses.
    \item There is a $(\Gamma,\tau)$-action on the polynomial and bracket rings, and so we can use 
        representation theory on it.  It is unclear whether we get anything beyond what we can 
        see with the block-diagonalisation of the rigidity matrix (see \cite{fow00,kangwai2000,sch10a,owen10}), but we also 
        don't know that there isn't. We will leave this for future investigation.
    \item We can consider an extension of the White and Whiteley pure condition to non-isostatic graphs, i.e. to a pure ideal of unexpectedly 
        self-stressed graphs, and  replicate the key elements of the theory.  Again, we will leave this for future work.  
        We do note that the over- and under-counted cases have a different character, and we would mainly be interested in the under-counted case.
\end{itemize}

In the next section, we will pursue the directions in the first two bullet points and incorporate them into heuristics for finding realisations with extensive self-stresses.

\section{Heuristics for creating self-stressed symmetric frameworks}\label{sec:alg}

\subsection{Finding symmetric realisations with large self-stress dimension}

To find (symmetric or non-symmetric) realisations of a graph $G$ in the plane with a large number of states of self-stress,  it is natural to first try to explore the self-stress dimension of  $(\Gamma,\tau)$-generic realisations of $G$, for all possible $\Gamma \subseteq \textrm{Aut}(G)$ and $\tau:\Gamma\to \text{Euc}_0(2)$. 

A key tool to explore this is the symmetry-extended Maxwell rule. This rule is originally due to Fowler and Guest \cite{fow00}, and has since been adapted and extended in various ways (see e.g. \cite{CFGSW09,smmb,mmsb,SGF14,foguow,SFG23}). It relies on the fact that the rigidity matrix of a $(\Gamma,\tau)$-symmetric framework can be block-decomposed for suitable symmetry-adapted bases in such a way that each block matrix corresponds to an irreducible representation $\rho_i$, $i=1,\ldots, t$, of the group \cite{fow00,kangwai2000,sch10a,owen10}. Moreover, the space of trivial infinitesimal motions decomposes into a direct sum of subspaces, each of which lies in the kernel of one of the block-matrices \cite{sch10a}. This leads to a refined Maxwell count -- one for each block matrix: while Maxwell's index theorem (recall Equation~(\ref{eq:maxwell})) arises from comparing the number of columns and rows in the entire rigidity matrix, we can now compare the number of columns and rows of each block-matrix, leading to additonal information.  

To apply the symmetry-extended Maxwell rule,  
one computes a number for each conjugacy class of the group, which depends on the number of vertices and edges that are fixed by elements in that conjugacy class \cite{fow00,CFGSW09}. (Here a vertex $v$ is \emph{fixed} by $\gamma\in\Gamma$ if $\gamma(v)=v$, and an edge $v_1v_2$ is fixed by $\gamma$ if $\gamma(v_i)=v_i$ for $i=1,2$, or $\gamma(v_1)=v_2$ and $\gamma(v_2)=v_1$.)
For a given ordering of the classes, this forms a vector (a ``character" in the language of group representation theory) which can be written uniquely as a linear combination $\sum_{i=1}^{t}\alpha_i\chi(\rho_i)$ of the characters $\chi(\rho_i)$ of the irreducible representations $\rho_1,\ldots, \rho_t$. The characters $\chi(\rho_i)$ can be read off from standard character tables -- see e.g. \cite{atk70,alt94}.  Remarkably, each $\alpha_i$ is an integer with the  property that if $\alpha_i>0$, then any $(\Gamma,\tau)$-symmetric framework must have  at least $\alpha_i\cdot \textrm{dim }{\rho_i}$ independent infinitesimal flexes (exhibiting the symmetry described by $\rho_i$), and if $\alpha_i<0$, then any $(\Gamma,\tau)$-symmetric framework must have  at least $|\alpha_i|\cdot \textrm{dim }{\rho_i}$ independent self-stresses of symmetry type $\rho_i$. Here $\textrm{dim }{\rho_i}$ denotes the dimension of the irreducible representation $\rho_i$.

So for any pair $(\tau,\Gamma)$ this provides us with  a lower bound on the self-stress dimension for a $(\tau,\Gamma)$-symmetric realisation of $G$. This suggests the following heuristic, which we will call Algorithm 1. 

Since we are mainly interested in engineering applications such as gridshell design, we focus on $2$-dimensional planar graphs. 
In Step (4) of Algorithm 1 we also avoid highly degenerate graph realisations arising from combinatorial symmetries that force vertices to coincide or edges to overlap. 

\medskip

\noindent \textbf{Algorithm 1}:
\begin{enumerate}
\item Input: planar graph $G$.

\item  Enumerate all possible subgroups $\Gamma< \text{Aut}(G)$.

\item For each $\Gamma$ found in Step (2), enumerate all $\tau:\Gamma\to \text{Euc}_0(2)$ that provide a geometric realisation of $\Gamma$ in $\mathbb{R}^2$.

\item For each pair $(\Gamma,\tau)$ identified in Step (3), discard this pair if there are two   vertices $u\neq v$ of $G$ that are fixed by $\gamma\in \Gamma$, where $\tau(\gamma)$ is a rotation, or if for some $\gamma\in \Gamma$, where $\tau(\gamma)$ is a reflection, the subgraph of $G$  induced by the vertices that are fixed by $\gamma$ is not  a disjoint union of paths.

\item For each pair $(\Gamma,\tau)$ that remains after Step (4), compute $\sum_{i=1}^{t}\alpha_i\chi(\rho_i)$. 

\item For each of the pairs $(\Gamma,\tau)$ from Step (5), compute the total detected self-stress dimension by adding up  $|\alpha_i|\cdot \textrm{dim }\rho_i$ over all $\alpha_i<0$. 

\item  Output: list of pairs $(\Gamma,\tau)$ 
ordered by the size of the corresponding self-stress dimensions detected in Step (6). (The self-stress dimensions of each symmetry type can also be given). 

\end{enumerate}

\medskip

Step (2) may be done using existing  software packages such as  NAUTY \cite{NAUTY}. For Step (3) we identify, for each group $\Gamma$ found in Step (2), the list of point groups in $\mathbb{R}^2$ it is isomorphic to. To do this, we may determine the generators of each $\Gamma$ and assign them to the generators of each possible point group in $\mathbb{R}^2$. This is feasible, as for $d=2$ (and for $d=3$ if one is interested in extending the heuristic to $3$-space),  there are well known classifications of all point groups, as described in \cite{atk70,alt94}, for example. Overall, Step (3) may be carried out using available computational algebra systems, such as Magma \cite{magma}, GAP \cite{GAP4}, 
Oscar \cite{OSCAR}, or Mathematica \cite{Mathematica}. 
For Step (4) we simply identify, directly from the graph automorphisms, the  vertices and edges of $G$ that are fixed by the relevant elements $\gamma$ of $\textrm{Aut}(G)$, and then check the corresponding isometries $\tau(\gamma)$.

Since in applications like gridshell design, one is often interested in graph realisations without edge crossings, one could  go further here and rule out pairs $(\Gamma,\tau)$ that obviously force edge crossings: if there exists $\gamma\in\Gamma$, where $\tau(\gamma)$ is a reflection, and there are two distinct edges $e,f$ of $G$ with $\gamma(e)=f$, then $(\Gamma,\tau)$ should be discarded, as the two edges must cross at a point on the reflection line. Similarly, if there are two distinct edges that are fixed by $\gamma$, where $\tau(\gamma)$ is the half-turn, or if there is a vertex  that is fixed by $\gamma'$, where $\tau(\gamma')$ is a non-trivial rotation, and an edge  that is fixed by $\gamma''$, where $\gamma''$ is the half-turn, then $(\Gamma,\tau)$ should also be discarded, as either two edges cross or an  edge goes through a vertex at the centre of rotation (the origin). 
 (In an extension of Algorithm 1 to $3$-space, one would simply check the analogous conditions to the ones in Step (4), which force degenerate realisations, or realisations with obvious edge crossings in $3$-space.) To further investigate whether a pair $(\Gamma,\tau)$ forces edge crossings, one could apply the corresponding symmetric averaging map to random  realisations of $G$ and check for planarity. Note that deciding whether a given framework has no edge crossings can be solved in time $O(n\log n)$ using standard methods from classical computational geometry \cite{CGbook,cgal:eb-24a}, but even a naive $O(n^2)$ algorithm would not be a bottleneck.

For Step (5) one only needs to identify, again directly from the graph automorphisms, the number of vertices and edges of $G$ that are fixed by the relevant elements of $\textrm{Aut}(G)$ and apply well-known formulas from group representation theory to find the coefficients $\alpha_i$ (see for example \cite[Theorem 4.4]{sch10a}). Steps (6) and (7) are then trivial. It is important to keep in mind that the maximum self-stress dimension found by this algorithm is only a lower bound for the maximum number of self-stresses with the corresponding symmetries, as there might be additional self-stresses that are not detectable with the symmetry-extended Maxwell rule.

Note that if, for a given graph $G$, we detect a self-stress  when we apply the symmetry-extended Maxwell rule with a point group $\tau(\Gamma)$, then we also detect the same self-stress  if we apply the rule with a larger group $\tau(\Gamma')>\tau(\Gamma)$. In fact, applying the rule with the larger group will reveal additional symmetry properties of the self-stress, and it may also find additional self-stresses. %Thus, in Algorithm 1 we essentially try to identify the largest possible point group symmetry a realisation of $G$ can exhibit. 
If we have any two point groups of different sizes, with neither group being contained in the other, then both groups may still reveal the same number of self-stresses, but  with different symmetry properties. Recall that 
 a group $\Gamma$ may also have different geometric realisations $\tau:\Gamma\to \text{Euc}_0(d)$, which can lead to different self-stress properties. (In dimension $2$, this only happens if $\Gamma$ has order 2, which geometrically is either a reflection or half-turn group. In 3-space, however, there are more examples like this.)

\begin{example} Consider for example the triangular prism graph. The pairs $(\Gamma,\tau)$ with \emph{planar} framework realisations are illustrated in Figure~\ref{fig:desarguestypes}. The half-turn and three-fold rotational group (with the given choices of $\Gamma$) do not induce any self-stress under symmetry-generic conditions (Figure~\ref{fig:desarguestypes}(c) and (e)). The reflection symmetry, and the dihedral symmetries of order 4 and 6 ((Figure~\ref{fig:desarguestypes}(a), (b), (d) and (f)) each induce a single self-stress, which can be detected in each case with the symmetry-extended Maxwell rule.
\end{example}

Once a desired symmetry has been identified with Algorithm 1,  the symmetric averaging map, applied to some realisation of $G$, can be used to construct a configuration with that symmetry and at least that number of self-stresses.  The symmetric realisation may then be analysed further to see if additional self-stresses can be detected, or to obtain extra information about the self-stresses (e.g. regarding $\Gamma$-extensiveness or even general extensiveness --  see Algorithm 4 below). Of course, one may also try to perturb the configuration (or even slightly alter the graph) to try to create additional self-stresses (see e.g. \cite{mmsb,mazbak}).

In practice, a designer may provide an initial configuration for a given graph which is to be optimised locally. In such cases one could simply explore the symmetries of nearby configurations (which can easily be identified by inspection) rather than all possible symmetries. The symmetric averaging map can again be used to produce the nearby symmetric frameworks.

\subsection{Finding  realisations with extensive self-stresses} 

 In this section, we rely on the theory described in Section~\ref{subsec:ssext} to establish an algorithm that checks  for general extensive self-stresses in (not necessarily symmetric) realisations of a given graph. In the following, we consider an arbitrary dimension $d$, but only $d$-isostatic graphs.

\medskip

\noindent \textbf{Algorithm 2}:

\begin{enumerate}
\item Input: $d$-isostatic graph $G$.
    \item Compute the pure condition $C_G$ for $G$ and factor it.
    \item For each factor $f_i$ obtained in Step (2), compute the self-stress dimension and support of self-stresses for $\p\in V(f_i)$ generic.
    \item Output: The factors $f_i$ with an extensive self-stress and otherwise report failure.
\end{enumerate}
    
\medskip

\begin{example} When we apply Algorithm 2 to the triangular prism graph (for $d=2$) again, then we find in Step (2) that the pure condition has three factors \cite{wwstresses}. Geometrically interpreted, two of the factors correspond to the points of the two triangles being collinear, and the third factor says that the bars connecting the two triangles must meet in a point. The generic self-stress dimension for each factor is $1$. While the first two factors have a self-stress that is only supported on a triangle, the third factor has generically full support and hence corresponds to an extensive self-stress. 
\end{example}

\medskip

We now mention an alternative to Algorithm 2, which is based on the well-known ``force density'' \cite{schek} or ``rubber-band'' 
\cite{lovasz,LLW,tutte} method.  The following algorithm is not a novelty to this paper, but we recall it 
just to set up the context.

\medskip

\noindent \textbf{Algorithm 3 (rubber-banding)}:

\begin{enumerate}
\item Input: $(d+1)$-connected graph $G = (V,E)$.
    \item Partition $V$ into ``boundary'' and ``interior'' vertices 
        $V^\partial\cup V^\circ$, where $|V^\partial| = d+1$.
    \item Assign points $\p^\partial = (\p_i)_{i\in V^\partial}$ in $\mathbb{R}^d$ to the boundary vertices so that 
        they are in general position.
    \item Assign edge weights $\mathbf \omega^\circ = (\mathbf \omega_{ij})$ to the edges with at least one 
        endpoint in $V^\circ$.
    \item Solve for the positions of the interior vertices, using the inhomogeneous linear system
    which has an equation
    \[
        \sum_{j :ij\in E}  \omega_{ij}(\p_i - \p_j) = 0
    \]
    for each vertex $i\in V^\circ$ and variables $\p_i$ for each $i\in V^\circ$.  
    If there is a solution, we get a configuration
    $\p = (\p^\partial, \p^\circ)$ in equilibrium at the interior vertices.  If there is not, 
    report failure and stop.
    \item Try to solve for the coefficients of a self-stress on the edges induced by the boundary 
        using the inhomogeneous linear system which has an equation 
    \[
        \sum_{j :ij\in E}  \omega_{ij}(\p_i - \p_j) = 0
    \]
    for each vertex $j\in V^\partial$ and the variables are the edge weights $\omega_{ij}$, 
    for edges induced by $V^\partial$; i.e., both $i,j\in V^\partial$.
    \item Output: $(\mathbf \omega,\p)$ if a solution exists and otherwise report failure.
\end{enumerate}

Algorithm 3 has received a lot of study in many communities (it is referred to as `rubber banding' by those in the rigidity theory community and the `force density method' by those in the engineering community).  Let us first comment 
on how the algorithm can fail.  The system in Step (5) is inhomogeneous, since some of 
the $\p_j$ appearing on the lhs correspond to vertices $j\in V^\partial$.  However, 
if $G$ is $(d+1)$-connected and the weights $\omega_{ij}$ in $\omega^\circ$ are positive, 
then the system is invertible  \cite{tutte,schek}, so it will have a unique solution (the number of 
variables and equations are both $d|V^\circ|$).  When 
the entries of $\omega^\circ$ have both signs, then it is possible for the system to 
be singular, in which case, whether there is a solution depends on the positions 
of the vertices in $\p^\partial$, since these determine the rhs. However, for generically 
chosen $\omega^\circ$, 
the system will be invertible \cite{schek,LLW}.  

The system in Step (6) is also inhomogeneous,
so there is a possibility that it is inconsistent.  The analysis of this step 
relies on the structure of the subgraph $G^\partial$ induced by $V^\partial$ and also a 
statics argument.  The key point is that we need the framework $(G^\partial,\p^\partial)$
to be able to resolve the load placed on it by the edges connecting to interior 
vertices.  When $\omega^\circ$ is either positive or generic and $G^\partial$ is 
isomorphic to $K_{d+1}$ it is a folklore fact that this happens (see, e.g., \cite{gstr})
when $\p^\partial$ is in general position.  The general case, especially when $G^\partial$
is not generically rigid, is a complicated non-linear problem that depends on both 
$\p^\partial$ and $\omega^\circ$.
The condition of $G^\partial$ inducing a $K_{d+1}$ can always be made to hold 
for either a $3$-connected planar graph in dimension $2$ or its planar dual graph, 
since one of them must contain a triangle (i.e., a $K_{3}$).  Hence, we can apply 
Algorithm 3 either directly or to the dual graph, and then recover a self-stressed 
framework using the Maxwell reciprocal \cite{maxwell1870reciprocal,cremona1872figure}.

It is shown in \cite{gstr} that, when the boundary vertices induce a $K_{d+1}$ subgraph, 
applying 
Algorithm 3 and varying $\omega^\circ$ produces an irreducible set of self-stresses for $G$.  The generic 
self-stresses in this set have maximum rank stress matrices \cite{connelly82}.  
Moreover, a generic framework satisfying a generic self-stress 
from Algorithm 3\footnote{This notion has meaning from irreducibility.} will be in general 
position, and it will have a space of self-stresses with a constant dimension called 
the \emph{stressed corank} of $G$.  This, in particular, implies that Algorithm 3 
produces extensive self-stresses for either almost all or almost no choices of $\mathbf \omega^\circ$,
because the output self-stress will be extensive exactly when it is the only self-stress, up to 
scale, for the output framework.  

Algorithm 2, while more difficult to implement, has a more refined behaviour.  There can be factors 
of the pure condition associated with self-stresses that do not have maximum rank stress 
matrices, which Algorithm 3 does not find.  Moreover, even if the stressed corank of $G$ 
is greater than one, in which case Algorithm 3 does not produce extensive self-stresses, 
Algorithm 2 may still be able to find them.  In general, the relationship between 
Algorithm 2 and Algorithm 3 is unclear.  For example, we do not know that the self-stresses 
from Algorithm 3 all correspond to a single factor of the pure condition.
It is left to future work to develop an explicit set of conditions for the guaranteed success of Algorithm 3.

\subsection{Finding symmetric realisations with extensive self-stresses} 

We now combine the heuristics from the previous two subsections  to  find \emph{symmetric} frameworks with extensive self-stresses. We focus on $2$-isostatic graphs.

\medskip

\noindent \textbf{Algorithm 4}:

\begin{enumerate}
    \item Input: $2$-isostatic graph $G$.
    \item Carry out Algorithm~1 to identify possible symmetry pairs $(\Gamma,\tau)$ with non-degenerate $(\Gamma,\tau)$-symmetric  framework realisations.
\item Carry out Algorithm~2 to determine the factors $f_i$ of the pure condition of $G$ with an extensive self-stress.
\item For each factor $f_i$ found in Step (3), and for each symmetry pair from Step (2), check, for some  $\p\in V(f_i)$, if $A\p \in V(f_i)$ for the corresponding symmetric averaging map $A$. This identifies symmetries  that induce an extensive self-stress.
\item Output: symmetries $(\Gamma, \tau)$ providing an extensive self-stress and otherwise report failure.
    \end{enumerate}

\begin{example} We again consider the triangular prism graph. If we apply Algorithm 1 to this graph, we find that the reflectional symmetry $\mathcal{C}_s$ indicated in Figure~\ref{fig:prism}(b), for instance, yields a (fully-symmetric) self-stress. Moreover, Algorithm 2 concludes that if the graph is realised generically with respect to one of the factors $f_i$ of the pure condition, then it has an extensive self-stress.  Applying Algorithm 4 to this graph, we find that $V(f_i)$ is  invariant under the averaging map for $\mathcal{C}_s$, and hence we conclude that $\mathcal{C}_s$-symmetric realisations of the prism graph typically have a (fully-symmetric) extensive self-stress. 
\end{example} 

The interaction between representation-theoretic symmetry methods embodied by Algorithm 1 and the algebraic 
ones in Algorithm 2 can be quite subtle.  The  example in Figure~\ref{fig:pure} has the property that 
a \emph{generic} framework of its symmetry type will be self-stress free.  However, there is (a necessarily non-generic) 
framework with this symmetry type that has an extensive self-stress.  Algorithm 2 will predict that there is an
extensively self-stressed framework but, since the symmetric frameworks are non-generic even in the 
relevant ``$V(f_i)$'', %Algorithm 2 will not predict that the extensively self-stressed framework can be made symmetric, and so 
Algorithm 4 will fail to find the symmetric, extensively self-stressed framework.
In order to overcome this limitation, we will need to develop a specialisation of White and Whiteley's
theory that applies to symmetric configuations.

To find special self-stressed configurations within sets of  frameworks with specified symmetries (like the one in Figure~\ref{fig:pure}), it is natural to look for a symmetry-adapted version of the pure condition  for the phase-symmetric orbit rigidity matrices  corresponding to the  block matrices of the block-diagonalised rigidity matrix \cite{sw2011,bt2015}. In the non-symmetric setting, the pure conditions were extracted by using ``tie-downs"  to square up the rigidity matrix. While we expect a similar approach to work for the orbit rigidity matrices, it will require a detailed analysis to determine, for each pair $(\Gamma,\tau)$, the tie-downs that eliminate all  trivial infinitesimal motions of each symmetry type corresponding to an irreducible representation of the group.

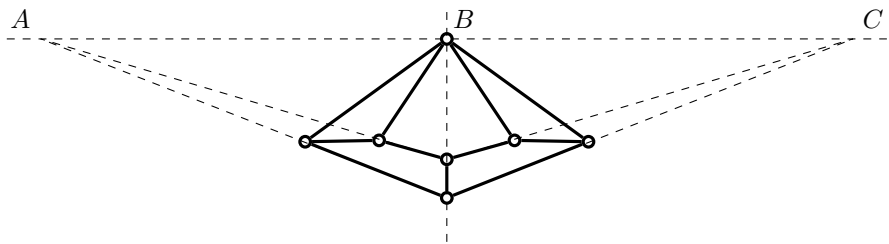
\begin{figure}[htp]
\begin{center}
\begin{tikzpicture}[very thick,scale=0.9]
\tikzstyle{every node}=[circle, draw=black, fill=white, inner sep=0pt, minimum width=4pt];
   
       \path (90:2.2cm) node (p1)  {} ;
     \path (162:2.2cm) node (p2)  {} ;
     
      \path (18:2.2cm) node (p5)  {} ;
      
     \path (0,-0.15) node (p6)  {} ;
     \path (0,0.42) node (p7)  {} ;
      \path (-1,0.7) node (p8)  {} ;
     \path (1,0.7) node (p9)  {} ;

       \node [draw=white, fill=white] (b) at (0.25,2.5) {$B$};
       \node [draw=white, fill=white] (b) at (-6.3,2.5) {$A$};
       \node [draw=white, fill=white] (b) at (6.3,2.5) {$C$};

     \draw[dashed,thin] (-6.5,2.2)--(6.5,2.2);
      \draw[dashed,thin] (p7)--(6,2.2);
      \draw[dashed,thin] (p6)--(6,2.2);
        \draw[dashed,thin] (p7)--(-6,2.2);
      \draw[dashed,thin] (p6)--(-6,2.2);

     \draw(p1)--(p2);
   \draw(p1)--(p8);
   \draw(p1)--(p9);
   \draw(p1)--(p5);
     \draw(p8)--(p2);
   \draw(p5)--(p9);
   \draw(p6)--(p7);
   \draw(p6)--(p2);
    \draw(p6)--(p5);
       \draw(p8)--(p7);
    \draw(p7)--(p9);

    \draw[dashed, thin](0,-0.8)--(0,2.7);

        \end{tikzpicture}
      \end{center}
\vspace{-0.6cm} \caption{A generically $2$-isostatic graph which is also $\mathcal{C}_s$-generically isostatic. So Algorithm 1 does not detect any self-stress. Algorithm 2 finds a a factor corresponding to an extensive self-stress. As shown in the figure, the graph can be realised with $\mathcal{C}_s$ symmetry so that it satisfies the pure condition (points $A$, $B$ and $C$ are collinear), but this is not the case for a $\mathcal{C}_s$-generic realisation. Hence Algorithm 4 will not lead us to the realisation in the figure.
} \label{fig:pure}
\end{figure}

\section{Modelling notes}\label{sec:model}

The intended application of the methods and results of this paper is to a problem in efficient roof design.  The 
modeling step has some subtleties that are well-understood by engineers,
but less so by rigidity theorists, so we briefly note them.

The basic setup is that we have a framework $(G,\tilde{\p})$ in $\RR^3$ that 
is the $1$-skeleton of a polyhedral surface.  Moreover, we assume that 
$(G,\tilde{\p})$ is, in fact, a Maxwell--Cremona lifting of some 
planar drawing $(G,\p)$ of its $1$-skeleton.
In particular, we have that $(G,\p)$ must have, at least, the 
self-stress that gives rise to the flat faces of its 
lifting to $(G,\tilde{\p})$.

Let us now consider applying a ``downward'' load $\mathbf f$ to $(G,\tilde{\p})$.
This means that each $\mathbf f_i$ is of the form 
\[
   \mathbf f_i = \beta_i \e_3 \qquad \text{(some $\beta_i\in \RR$)}
\]
with $\e_3=(0,0,1)^\top$ and not all the $\beta_i = 0$.  If $(G,\tilde{\p})$ resolves $\mathbf f$, 
we have an equation of the form 
\[
    \mathbf \omega^\top R(\tilde{\p}) = \mathbf f^\top 
\]
for some $\mathbf \omega\in \RR^m$.  The form of $\mathbf f$ now implies that 
\[
    \mathbf \omega^\top R(\p) = 0
\]
In other words: resolutions of downward (or upward) forces by $(G,\tilde{\p})$
give rise to self-stresses of the projected framework $(G,\p)$.  

This correspondence works in reverse as well, but there is a subtlety.  Given 
some arbitrary self-stress $\mathbf \omega$ of $(G,\p)$, it is 
true that 
\[
    \mathbf \omega^\top R(\tilde{\p}) =: \mathbf g  
\]
has the form 
\[
   \mathbf g_i = \beta_i \e_3\qquad \text{(some $\beta_i \in \RR$).}
\]
However, the construction reveals that we do not have very much 
control over $\mathbf g$.  In the worst case, $\mathbf \omega$ is also a self-stress of $(G,\tilde{\p})$ itself, and $\mathbf g$ is even the zero load.

More generally, the modeling problem of resolving application-relevant 
loads is only partially captured by the question of maximising the 
number of self-stresses in $(G,\p)$.  To describe the 
general version, let us define $\mathcal{D}$ to be the 
subspace of $\mathbf f\in \RR^{3n}$ so that $\mathbf f_i = \beta_i \e_3$ for all $i$,
and $\mathcal{T}$ to be the space of trivial infinitesimal motions at $\tilde{\p}$
in $3$-space.  We then have a space
\[
    F  \subseteq \mathcal{D}/\mathcal{T}
\]
of target loads to resolve. Then we need that 
\[
    (\ker R(\p)^\top)R(\tilde{\p})/\mathcal{T} \supseteq F/\mathcal{T}
\]
In other words, we need that for each $\mathbf f\in F$, there is a 
self-stress $\mathbf \omega$ of $(G,\p)$ which can hit 
$\mathbf f$ when we put it to the left of the rigidity matrix of 
$(G,\tilde{\p})$.

To expand on why this is interesting, we are \emph{not}
requiring that $(G,\tilde{\p})$ is infinitesimally rigid as a 
$3$-dimensional framework.  If that were the case, as happens
with a triangulated surface, we would be able to hit 
\emph{any} load orthogonal to the trivial motions at $\tilde{\p}$ 
with some $\mathbf \omega$.  Looking at the zero pattern, this implies that the 
pre-image of $F$ under this map contains the self-stresses 
of $(G,\p)$.  Our situation is more interesting: since $(G,\tilde{\p})$ 
doesn't resolve every load, whether or not it does resolve the 
ones in $F$ depends on the geometry of $(G,\tilde{\p})$ and 
not just the dimension of the space of self-stresses 
of $(G,\p)$.

\section{Practical considerations}\label{sec:practice}

When we fabricate a framework, there will be some error relative to 
an idealised design.  In our setting, this poses problems, again, as 
in the previous section, our frameworks $(G,\tilde{\p})$ are 
not infinitesimally rigid.  Even if $(G,\tilde{\p})$ is 
generic (i.e., its rigidity matrix has maximum rank over 
all $3$-dimensional configurations), it may be that a 
small perturbation, while also generic, cannot resolve a 
specific load $\mathbf f$.  The guarantee of genericity is only about 
the dimension of the space of resolvable loads, and, in the 
case where this is strictly less than the dimension of all 
equilibrium loads, changing $\tilde{\p}$ can change which 
loads are resolvable.  (Things are even more complicated when 
$(G,\tilde{\p})$ is not generic.)

From the discussion above, it is too much to assume that a 
less-than-ideal realisation of the design perfectly 
resolves the target load $\mathbf f$.  Instead, we present here 
a simple linear analysis of how error in the configuration 
propagates to resolving  $\mathbf f$.  The approach is 
very simple: we use the same $\mathbf \omega$ from the ideal design 
and see how close a perturbed framework comes to resolving the 
target load \emph{using this $\mathbf \omega$}.  (We don't try to 
find the best $\mathbf \omega$ for the perturbed structure and the 
target load $\mathbf f$, which requires a non-linear analysis.)

Suppose that we have some $\mathbf \omega\in \RR^m$ so that 
\[
    \mathbf \omega^\top R(\tilde{\p}) = \mathbf f^\top
\]
and that we have some tolerance $\eps$ in the sense that the structure 
can resolve loads of norm $\eps$ in some other way (e.g., stiffness in the
joints through moment-resistant connections or other elastic considerations).  Let us consider replacing $\tilde{\p}$ 
by $\tilde{\p} + \e$.  We can compute 
\[
     \mathbf \omega^\top R(\tilde{\p} + \e) = \mathbf f^\top +  \mathbf \omega^\top R(\e). 
\]
The norm of the error vector is at most 
\[
    \|R(\e)\|\|\mathbf \omega\|
\]
and we can estimate the norm of a rigidity matrix explicitly by 
picking a unit vector $\mathbf u$ in $\RR^{dn}$ and computing 
\[
    \mathbf u^\top R(\e)^\top R(\e)\mathbf u = \sum_{ij\in E(G)} \iprod{\e_i - \e_j}{\mathbf u_i - \mathbf u_j}^2
    \le\sum_{ij\in E(G)} \|\e_i - \e_j\|^2\|\mathbf u_i - \mathbf u_j\|^2 
    \le 4m \operatorname{diam}(\e)^2.
\]
It now follows that 
\[
    \|R(\e)\| \le 2\sqrt{m}\operatorname{diam}(\e)
\]
where the diameter of $\e$ is 
\[
    \max_{i\neq j} \|\e_i - \e_j\|.
\]
So if we pick $\e$ so that
\[
    \operatorname{diam}(\e) < \frac{\eps}{2\sqrt{m}\|\mathbf \omega\|}
\]
then the error is bounded by $\eps$.  
This seems plausible for use in practice, 
but a more refined linear analysis might improve the numbers.
It is interesting that the dependence on $\tilde{\p}$ is expressed 
through $\mathbf \omega$ and that we get a bound in terms of how 
much edge lengths changed, which seems more natural than 
absolute changes in the positions of the vertices.

\section*{Acknowledgements} We thank William Baker, Arek Mazurek and Jessica Sidman for helpful discussions. B.S. was partially supported by the ICMS Knowledge Exchange Catalyst Programme.

\bibliographystyle{abbrvnat}
\bibliography{avg}

\end{document}